\documentclass{siamart1116}

\usepackage{lipsum}
\usepackage{amsfonts}
\usepackage{graphicx}
\usepackage{epstopdf}
\usepackage{algorithmic}
\usepackage{mathrsfs}
\usepackage{bm}

\ifpdf
  \DeclareGraphicsExtensions{.eps,.pdf,.png,.jpg}
\else
  \DeclareGraphicsExtensions{.eps}
\fi

\numberwithin{theorem}{section}

\newcommand{\TheTitle}{Multiscale differential {R}iccati equations for linear quadratic regulator problems}
\newcommand{\TheShortTitle}{Multiscale {R}iccati equations for {LQR} problems}
\newcommand{\TheAuthors}{Axel M\r{a}lqvist, Anna Persson and Tony Stillfjord}

\headers{\TheShortTitle}{\TheAuthors}

\title{{\TheTitle}\thanks{Submitted to the editors 2017-06-13.
\funding{This work was supported by the Swedish Research Council under grant no.~2015-04964.}}}

\author{
  Axel M\r{a}lqvist\thanks{Mathematical Sciences, Chalmers University of Technology and the University of Gothenburg
    (\email{axel@chalmers.se}, \email{peanna@chalmers.se}, \email{tony.stillfjord@gu.se}.)}
  \and
  Anna Persson\footnotemark[2]
  \and
   Tony Stillfjord\footnotemark[2]
}

\usepackage{amsopn}

\ifpdf
\hypersetup{
  pdftitle={\TheTitle},
  pdfauthor={\TheAuthors}
}
\fi



\newcommand{\xb}{\bm{x}}
\newcommand{\yb}{\bm{y}}
\newcommand{\Ab}{\bm{A}}
\newcommand{\Bb}{\bm{B}}
\newcommand{\Cb}{\bm{C}}
\newcommand{\Db}{\bm{D}}

\newcommand{\Gb}{\bm{G}}
\newcommand{\Ib}{\bm{I}}
\newcommand{\Lb}{\bm{L}}
\newcommand{\Mb}{\bm{M}}

\newcommand{\Qb}{\bm{Q}}
\newcommand{\Rb}{\bm{R}}
\newcommand{\Sb}{\bm{S}}
\newcommand{\Ub}{\bm{U}}
\newcommand{\Vb}{\bm{V}}
\newcommand{\Wb}{\bm{W}}
\newcommand{\Xb}{\bm{X}}

\newcommand{\Ac}{\mathcal{A}}
\newcommand{\Bc}{\mathcal{B}}
\newcommand{\Cc}{\mathcal{C}}

\newcommand{\Gc}{\mathcal{G}}
\newcommand{\Lc}{\mathcal{L}}
\newcommand{\Qc}{\mathcal{Q}}
\newcommand{\Rc}{\mathcal{R}}
\newcommand{\Tc}{\mathcal{T}}

\newcommand{\Fs}{\mathscr{F}}
\newcommand{\Gs}{\mathscr{G}}

\renewcommand{\phi}{\varphi}
\newcommand{\LOD}{\text{ms}}
\newcommand{\f}{\text{f}}
\newcommand{\ALOD}{\Ac^\LOD}
\newcommand{\BLOD}{\Bc^\LOD}
\newcommand{\CLOD}{\Cc^\LOD}
\newcommand{\XLOD}{X^\LOD}
\newcommand{\Xt}{\tilde{X}}
\newcommand{\Yt}{\tilde{Y}}

\newcommand{\LtwoOmega}{L^2(\Omega)}
\newcommand{\Ltwo}{L^2}
\newcommand{\VLOD}{V_\LOD}
\newcommand{\LHH}{\Lc(\Ltwo)}
\newcommand{\LVV}{\Lc(V)}

\newcommand{\iprod}[2]{\left( #1, #2 \right)}
\newcommand{\norm}[1]{\lVert #1 \rVert}
\newcommand{\bignorm}[1]{\big\lVert #1 \big\rVert}

\newcommand{\LHnorm}[1]{\norm{#1}_{\LHH}}
\newcommand{\bigLHnorm}[1]{\bignorm{#1}_{\LHH}}
\newcommand{\LVnorm}[1]{\norm{#1}_{\LVV}}

\DeclareMathOperator{\Id}{Id}
\newcommand{\IdHh}{\Id_H^h}
\newcommand{\IbHh}{\Ib_H^h}
\newcommand{\IdLODh}{\Id_{\LOD}^h}

\newcommand{\diff}[1]{\mathrm{d}#1}
\newcommand{\ds}{\diff{s}}
\newcommand{\dt}{\diff{t}}

\newcommand{\R}{\mathbb{R}}
\renewcommand{\exp}[1]{\mathrm{e}^{#1}}
\newcommand{\domain}[1]{\mathcal{D}(#1)}
\DeclareMathOperator*{\essinf}{ess\,inf}
\DeclareMathOperator*{\esssup}{ess\,sup}
\DeclareMathOperator{\diam}{diam}
\DeclareMathOperator{\interior}{int}

\newsiamthm{assumption}{Assumption}
\newsiamremark{remark}{Remark}

\begin{document}

\maketitle

\begin{abstract}
We consider approximations to the solutions of differential Riccati equations in the context of linear quadratic regulator problems, where the state equation is governed by a multiscale operator. Similarly to elliptic and parabolic problems, standard finite element discretizations perform poorly in this setting unless the grid resolves the fine-scale features of the problem. This results in unfeasible amounts of computation and high memory requirements. In this paper, we demonstrate how the localized orthogonal decomposition method may be used to acquire accurate results also for coarse discretizations, at the low cost of solving a series of small, localized elliptic problems. We prove second-order convergence (except for a logarithmic factor) in the $L^2$ operator norm, and first-order convergence in the corresponding energy norm. These results are both independent of the multiscale variations in the state equation. In addition, we provide a detailed derivation of the fully discrete matrix-valued equations, and show how they can be handled in a low-rank setting for large-scale computations. In connection to this, we also show how to efficiently compute the relevant operator-norm errors. Finally, our theoretical results are validated by several numerical experiments.
\end{abstract}

\begin{keywords}
  Multiscale, localized orthogonal decomposition, finite elements, linear quadratic regulator problems, differential {R}iccati equations
\end{keywords}

\begin{AMS}
49N10, 65N12, 65N30, 93C20
\end{AMS}




\section{Introduction}
In a linear quadratic regulator (LQR) problem, the state $x$ is a model of a system whose evolution can be influenced through the input $u$. The goal is to drive certain measurable quantities of the system, the output $y$, to a given target which is typically zero. The relations between $x$, $u$ and $y$ are given by the state and output equations
\begin{align}
  \dot{x} &= \Ac x + \Bc u, \quad x(0) = x_0, \label{eq:state}\\
  y &= \Cc x, \label{eq:output}
\end{align}
where $\Ac$, $\Bc$ and $\Cc$ are given operators. The optimal input function $u^*$ is found by minimizing the cost functional
\begin{equation*}
  J(u) = \int_{0}^{T} { \iprod{\Qc y}{y} + \iprod{\Rc u}{u} \,\dt} + \iprod{\Gc y(T)}{y(T)},
\end{equation*}
where $\Qc$, $\Rc$ and $\Gc$ are given weighting factors. It can be shown (see e.g.~\cite{AbouKandil_etal2003, LasieckaTriggiani2000}) that $u^*$ is given in feedback form as $u^*(t) = - \Rc^{-1} \Bc^* X(T-t) x(t)$, where $X$ is the solution to an operator-valued differential Riccati equation (DRE):
\begin{align} \label{eq:DRE}
  \begin{aligned}
    \dot{X}(t) &= \Ac^{*}X(t) + X(t)\Ac + \Cc^*\Qc\Cc - X(t)\Bc\Rc^{-1}\Bc^*X(t),\\
          X(0) &= \Gc.
  \end{aligned}
\end{align}
In the case of a nonzero output target, one additional differential equation for the evolution of $u^*$ has to be solved.

In this paper, we consider the case when the operator $\Ac$ exhibits multiscale behaviour. In particular, we consider diffusion problems where the spatial variation of the diffusion coefficient is on a fine scale compared to the computational domain. This e.g.\ occurs in the modeling of composite materials and flows in porous media. Numerically approximating the solutions to elliptic or parabolic equations given by such operators in the usual way is difficult, because a very fine discretization is necessary to resolve the fine-scale structure. These difficulties are exacerbated when considering DREs such as \cref{eq:DRE}, as their solution essentially requires solving many parabolic equations.

A by now well established method for multiscale elliptic and parabolic problems is the localized orthogonal decomposition (LOD)~\cite{MalqvistPeterseim_2014,HenningMalqvist2014}. It is a modification of the finite element method (FEM), which incorporates some of the fine-scale structure into a coarse discretization by precomputing a series of localized fine-scale problems. Due to the localization, these are much cheaper to evaluate than the full fine-scale problem and may additionally be solved in parallel.

We note that finite elements were introduced for the approximation of optimal control problems already in the 1970's, see e.g.~\cite{McKnightBosarge1973,BosargeJohnsonSmith1973,Falk1973,Winther1978}, and the field has grown much in several different directions since then. When diffusion problems have been considered, the focus has typically been on constant or slowly varying diffusion. Recently, however, also optimal control problems of multiscale type have been considered in e.g.~\cite{CaoLiuAllegrettoLin2012,ChenHuangLiuYan2015,Li2010}.
 None of these consider the LOD approach, instead preferring homogenization or asymptotic expansions. Additionally, a common assumption is that the multiscale features are periodic, which is frequently not the case in applications.

The focus in this paper is on the approximation of DREs such as \cref{eq:DRE}. In contrast to the forward-adjoint approach, which solves a specific optimal control problem, the DRE provides the feedback laws for all problems defined by the operators $\Ac, \Bc, \Cc$. While more expensive to solve, it can be precomputed and reused in many different situations. We refer to~\cite{Bensoussan_etal_2007,LasieckaTriggiani2000} for an overview of Riccati theory, with the latter reference treating very general problems.

Our main result is that LOD-approximations to the solution of \cref{eq:DRE} with a mesh size $H$ converge with order $H^2\log(H^{-1})$ in
the $\Ltwo$ operator norm to a given accurate fine-scale FEM approximation.
The convergence in the corresponding operator energy norm is shown to be of order $H$.
We note that $H^2\log(H^{-1})$-convergence of FEM approximations to the exact solution of \cref{eq:DRE} has previously been shown in~\cite{KrollerKunisch1991}, and similar results for algebraic Riccati equations can be found in~\cite{LasieckaTriggiani2000}. (See also~\cite{Rosen1991,BennerMena2016} for convergence results without orders in related settings.)  However, the error constants in these results depend on the multiscale variations of $\Ac$, and thus such convergence is not observed in practice. This is not the case for our present results.

For practical computations, also a temporal discretization is necessary; for this we consider a low-rank splitting scheme as introduced in~\cite{Stillfjord2015}. Such methods decompose the DRE into its affine and nonlinear parts and approximate these separately, thereby greatly reducing the computational cost. The affine problem requires the approximation of several parabolic equations involving $\Ac$ in each time step.
 As the computational efficiency gain for LOD increases with the number of times the modified basis may be reused, splitting schemes are thus particularly well suited to be combined with the LOD method.

We demonstrate how to transform the FEM and LOD discretizations into matrix-valued equations, and how to implement the fully discrete methods. Even if LOD reduces the need for very fine discretizations, large 2D or 3D-problems may still yield large matrices. We therefore consider the low-rank approach, which greatly reduces the necessary amount of computations. As a side effect, this also allows us to compute errors in the operator norms very efficiently.

A brief outline of the paper is as follows: We formalize the setting and our basic assumptions in \cref{sec:setting}, and define the different spatial discretizations in \cref{sec:space_discr}. Convergence of the LOD approximations with the appropriate order is then shown in \cref{sec:errors}. The matrix-valued formulations of the discretized DREs and related questions are discussed in \cref{sec:matrices}, while \cref{sec:time} is devoted to the temporal discretization and low-rank setting. Finally, we present several numerical experiments and their results in \cref{sec:experiments}.

\section{Setting} \label{sec:setting}
Let $\Omega \in \R^d$, $d \le 3$, be a bounded polygonal/polyhedral domain. We consider the separable Hilbert spaces $\LtwoOmega$, $V = H^1_0(\Omega)$, $U$ and $Z$, where $\LtwoOmega$ corresponds to the state space, $U$ is the control space and $Z$ is the observation space. In the following, the specification of $\Omega$ will be omitted. We write $\iprod{\cdot}{\cdot}$ and $\norm{\cdot}$ for the inner product and norm on $\Ltwo$, and denote the corresponding quantities on $V$, $U$ and $Z$ by subscripts. To define the state evolution operator $\Ac$, we assume that the inner product $a(u,v) = \int \kappa \nabla u \cdot \nabla v$ on $V \times V$ is given, with assumptions on $\kappa$ given below. Then $\Ac \colon \Ltwo \supset \domain{\Ac} \to \Ltwo$ is defined by $\iprod{\Ac u}{v} = -a(u,v)$ and $\domain{\Ac} = \{u \in V \;|\; \Ac u \in \Ltwo \}$.

 Further, let the input operator $\Bc \colon U \to \Ltwo$ and the output operator $\Cc \colon \Ltwo \to Z$ be given. We also consider the output and input weighting operators $\Qc \colon Z \to Z$ and $\Rc \colon U \to U$ (which could be included in $\Cc$ and $\Bc$ but are typically not) and the final state weighting operator $\Gc \colon \Ltwo \to \Ltwo$. By $^*$, we denote Hilbert-adjoint operators with respect to $\Ltwo$, so that e.g.\ $\Bc^*\colon \Ltwo \to U$ satisfies $\iprod{\Bc x}{y} = \iprod{x}{\Bc^*y}$ for all $x \in U$ and $y \in \Ltwo$. Finally, we denote the linear bounded operators from one generic Hilbert space, $Y$, to another, $W$, by $\Lc(Y,W)$. When $W=Y$, we abbreviate $\Lc(Y) = \Lc(Y,Y)$.

In this notation, the weak form of \cref{eq:DRE} is to find $X \in \LHH$ satisfying
\begin{equation}
  \label{eq:DRE_weak}
  \iprod{\dot{X}x}{y} = \iprod{Xx}{\Ac y} + \iprod{Xy}{\Ac x} + \iprod{\Qc\Cc x}{\Cc y}_Z - \iprod{\Rc^{-1} \Bc^* X x}{\Bc^* X y}_U ,
\end{equation}
for all $x,y \in \domain{\Ac}$.

\begin{assumption} \label{ass:operators}
The diffusion coefficient $\kappa \in L^\infty(\R^{d \times d})$ is symmetric and satisfies
\begin{align*}
  0 &< \alpha := \essinf_{x \in \Omega} \inf_{v\in \R^d \setminus \{0\}} \frac{\kappa(x) v \cdot v}{v \cdot v},\\
\infty &> \beta := \esssup_{x \in \Omega} \sup_{v\in \R^d \setminus \{0\}} \frac{\kappa(x) v \cdot v}{v \cdot v}.
\end{align*}
In addition, $\Bc \in \Lc(U, \Ltwo)$, $\Cc \in \Lc(\Ltwo, Z)$, $\Qc \in \Lc(Z)$, $\Rc \in \Lc(U)$ is invertible with $\Rc^{-1} \in \Lc(U)$ and $X(0) =  \Gc \in \Lc(\Ltwo)$.
\end{assumption}
The first part of \cref{ass:operators} shows that $a$ is a bounded and coercive bilinear form, which means that $\Ac$ is the generator of an analytic semigroup $\exp{t\Ac} \colon \Ltwo \to \Ltwo$, see e.g.~\cite[Theorem 3.6.1]{Tanabe1979}. In conjunction with the boundedness assumptions on $\Bc$, $\Cc$, $\Qc$ and $\Rc$, this guarantees the existence and uniqueness of a solution to \cref{eq:DRE_weak}. In fact, there is even a classical solution to \cref{eq:DRE}~\cite[Part IV, Ch.~3]{Bensoussan_etal_2007}, which means that the $\Ac^*X + X\Ac$ term can be extended to an operator in $\LHH$. As a consequence, Equation \cref{eq:DRE_weak} holds also for $x, y \in \Ltwo$. We note that these conclusions are valid also under various weaker forms of \cref{ass:operators}, which additionally permit the treatment of boundary control and observation~\cite{LasieckaTriggiani2000}. A discussion on an extension of our results to such a setting may be found in \cref{subsec:BCC}.

\section{Spatial discretization}\label{sec:space_discr}
We first introduce the FEM approximation of \cref{eq:DRE_weak}. To this end, we let $\Tc_h$ be a triangulation of $\Omega$ with meshwidth $h$ and $N_h$ internal nodes. The subspace $V_h \subset V$ denotes the space of continuous and piecewise affine functions on $\Tc_h$, and we denote the corresponding nodal basis functions by $\{\phi^h_i\}_{i=1}^{N_h}$. This discretization is referred to as the fine, or sometimes also reference, mesh, see further \cref{sec:space_discr_LOD} below.

We also consider a coarse discretization space $V_H \subset V_h$ for $H>h$, with the corresponding family of triangulations $\{\Tc_H\}_{H>h}$, which is assumed to be quasi-uniform. For these triangulations, we let $B_K$ be the largest ball contained in the triangle $K$ and denote by $\gamma>0$ the shape regularity of the mesh, defined by
\begin{align*}
	\gamma := \max \gamma_K, \quad \gamma_K := \frac{\diam B_K}{\diam K}, \quad \forall K \in \Tc_{H}, \ H>h.
\end{align*}
Furthermore, we let $\IdHh\colon V_H \to V_h$ denote the identity operator between these spaces, i.e.\ $\IdHh u = u$ for all $u \in V_H$. Similarly, $\Id_h \colon V_h \to \Ltwo$ is the identity operator mapping into $\Ltwo$ and $\Id_h^*$ is the $\Ltwo$-orthogonal projection of $\Ltwo$ onto $V_h$.

The semi-discretized weak form of~\cref{eq:DRE} is defined by
\begin{equation} \label{eq:DRE_FEM}
  \iprod{\dot{X}_h x}{y} = \iprod{X_h x}{\Ac_h  y} + \iprod{X_h y}{\Ac_h  x} + \iprod{\Qc\Cc_h x}{\Cc_h y}_Z - \iprod{\Rc^{-1} \Bc_h^* X_h x}{\Bc_h^* X_h y}_U .
\end{equation}
for all $x, y \in V_h$ and with $X_h \colon V_h \to V_h$ satisfying $X_h(0) = \Id_h^* X(0) \Id_h$. Here, the operators $\Ac_h  \colon V_h \to V_h$, $\Bc_h \colon U \to V_h$ and $\Cc_h \colon V_h \to Z$ satisfy
\begin{equation*}
  \iprod{\Ac_h  x}{y} =  \iprod{\Ac x}{y}, \quad  \iprod{\Bc_h u}{y} =  \iprod{\Bc u}{y} \quad \text{and} \quad \iprod{\Cc_h x}{z} =  \iprod{\Cc x}{z}
\end{equation*}
for all $x,y \in V_h$, $u \in U$ and $z \in Z$.
We note that $X$ can be proven to be self-adjoint, so we additionally require that $X_h$ is self-adjoint.

For the coarse discretization, we have the same equation but with $H$ instead of $h$. We observe that the coarse and fine operators are related in the following way:
\begin{equation}\label{eq:fine_coarse_operator_relations}
  \Ac_H = (\IdHh)^* \Ac_h  \IdHh, \quad \Bc_H = (\IdHh)^* \Bc_h \quad \text{and} \quad \Cc_H = \Cc_h \IdHh,
\end{equation}
and that the natural extension of $X_H$ to a map on $V_h$ is given by $\IdHh X_H (\IdHh)^*$. Here, $(\IdHh)^*$ is the $\Ltwo$-orthogonal projection of $V_h$ onto $V_H$.

\subsection{Localized orthogonal decomposition}\label{sec:space_discr_LOD}
If $\kappa$ is varying on a small scale of size $\epsilon>0$, then the classical FEM approximation of a parabolic problem $\dot{x} = \Ac x + f$ may yield poor results, unless $h$ is sufficiently small to resolve the fine-scale variations. That is, we typically do not observe $O(h^2)$-convergence until $h < \epsilon$, which requires infeasible amounts of computation. The same behaviour occurs for the $X_h$-discretizations of~\cref{eq:DRE_weak}.

To this end, we assume that $h$ is sufficiently small so that $X_h$ is a good approximation of $X$. That is, $h < \epsilon$, and we refer to $X_h$ as the reference solution. The aim is now to approximate $X_h$ by using a multiscale space $V_\LOD\subset V_h$ of the same dimension as the coarse space $V_H$. To obtain such a space, we use the localized orthogonal decomposition (LOD) method introduced in \cite{MalqvistPeterseim_2014}, which incorporates fine-scale information in the coarse-scale space. The construction involves the solution of several fine-scale, but localized and parallelizable, problems. We briefly summarize the procedure here and refer to \cite{MalqvistPeterseim_2014, HenningMalqvist2014}, for the details.

To define the multiscale space $V_\LOD$, we first introduce an interpolation operator $I_{H}\colon V_h \rightarrow V_{H}$ that fulfills
\begin{align*}
H^{-1}\norm{v-I_H v}_{L^2(K)} + \norm{\nabla I_H v}_{L^2(K)} \le C \norm{\nabla v}_{L^2(\omega_K)}, \forall v \in V_h,
\end{align*}
for all triangles $K\in \Tc_H$, where $\omega_K :=\cup \{\hat{K} \in \Tc_H: \hat{K} \cap K \neq \emptyset \}$. In this paper we use the weighted Cl\'{e}ment interpolant as in \cite{MalqvistPeterseim_2014}. Let $V_\f$ denote the kernel of $I_H$,
\begin{align*}
V_\f := \ker I_H = \{v \in V_h: I_Hv=0 \},
\end{align*}
and note that $V_h$ can be decomposed as $V_h = V_H \oplus V_\f$, meaning that every $v_h \in V_h$ can be written as $v_h = v_H + v_\f$ with $v_H\in V_H, v_\f \in V_\f$. We now introduce the (global) correction operator $\hat{Q}_h \colon V_H \rightarrow V_\f$ by
\begin{align*}
a(\hat{Q}_hv,w) = a(v,w), \quad \forall w \in V_\f,
\end{align*}
and define the (global) multiscale space as $\hat{V}_\LOD := \hat{R}_hV_H = V_H - \hat{Q}_h V_H$, with $\hat{R}_h:=\IdHh - \hat{Q}_h$. This leads to the decomposition $V_h = \hat{V}_\LOD \oplus V_\f$ with the orthogonality $a(\hat{v}_\LOD, v_\f)=0$, $\hat{v}_\LOD \in \hat{V}_\LOD$, $v_\f \in V_\f$. Note that $\hat{Q}_h$ is the orthogonal projection onto $V_\f$ with respect to the inner product $a(\cdot,\cdot)$, i.e. the Ritz projection onto $V_\f$, and $\hat{V}_\LOD$ is the orthogonal complement to $V_\f$. From the construction it follows that $\dim \hat{V}_\LOD = \dim V_H$. Indeed, a basis for $\hat{V}_\LOD$ is given by $\{\varphi^H_i - \hat{Q}_h \varphi^H_i: i=1,..., N_H \}$.

In general, the corrections $\hat{Q}_h \varphi^H_i$  have global support and are expensive to compute, since they are posed in the entire fine scale space $V_\f \subseteq V_h$. To overcome this, it is observed that the corrections have exponential decay away from the $i$:th node of $\Tc_H$ (see \cite{MalqvistPeterseim_2014, HenningMalqvist2014}), which motivates a truncation of the corrections. For this purpose, we define patches $\omega_k(K)$ of size $k$ around each $K \in \Tc_H$ by the following:
\begin{align*}
\omega_0(K) &:= \interior{K}, \\
\omega_k(K) &:= \interior{ \big(\cup \{\hat{K} \in \Tc_H : \hat{K} \cap \overline{\omega_{k-1}(K)} \neq \emptyset\}\big)}, \quad k=1,2,....
\end{align*}
Further, we define $V^K_{\f }:=\{v\in V_\f :v(z)=0 \text{ on }\overline{\Omega}\setminus \omega_k(K)\}$ to be the restriction of $V_\f$ to the patch $\omega_k(K)$. For brevity, we do not include the dependence on $k$ in the notation. Now note that the correction operator $\hat{Q}_h$ can be written as the sum $\hat{Q}_h = \sum_{K\in \Tc_H} \hat{Q}_h^K,$ where
\begin{align*}
a(\hat{Q}_h^K v,w)=\int_K \kappa \nabla v \cdot \nabla w , \quad  \forall w \in V_\f, \ v \in V_H,\ K \in \Tc_H.
\end{align*}
We can now localize these computations by replacing $V_\f$ with $V_{\f }^K$. Define $Q_h^K \colon V_H \rightarrow V_{\f }^K$ such that
\begin{align*}
a(Q_h^Kv,w) = \int_K \kappa \nabla v \cdot \nabla w , \quad \forall w \in V_{\f }^K, \ v \in V_H,\ K \in \Tc_H.
\end{align*}
Finally, we can define a local operator $Q_h := \sum_{K\in \Tc_H} Q_h^K$ and a localized space $V_\LOD := R_h V_H = V_H - Q_h V_H$, with $R_h := \IdHh - Q_h$.

The approximation properties (and the required computational effort) of the space $V_\LOD$ depends on the choice of $k$. In \cite{HenningMalqvist2014} it is proven that convergence of order $H^2$ is obtained if $k$ is chosen proportional to $\log H^{-1}$. In this paper we therefore assume that $k \sim \log H^{-1}$ to avoid explicitly stating the dependence on $k$.

To define an LOD-approximation to the solution $X_h$ in~\cref{eq:DRE_FEM}, we additionally need to introduce the identity operator $\IdLODh : \VLOD \to V_h$, $\IdLODh u = u$. Its $\Ltwo$-adjoint is the $\Ltwo$-orthogonal projection of $V_h$ onto $\VLOD$. Replacing the space $V_h$ with $V_\LOD$ then results in the problem to find $\XLOD_h \colon \VLOD \to \VLOD$ satisfying
\begin{equation} \label{eq:DRE_LOD}
  \begin{aligned}
  \iprod{\dot{X}^{\LOD}_h u}{v} &= \iprod{\XLOD_h u}{\ALOD_h  v} + \iprod{\XLOD_h v}{\ALOD_h  u} \\
&+ \iprod{\Qc\CLOD_h u}{\CLOD_h v}_Z - \iprod{\Rc^{-1} (\BLOD_h)^* \XLOD_h u}{(\BLOD_h)^* \XLOD_h v}_U
 \end{aligned}
\end{equation}
for all $u, v \in \VLOD$ and the initial condition $\XLOD_h(0) = (\IdLODh)^*\Id_h^* X(0) \Id_h \IdLODh$.
Here, the operators $\ALOD_h \colon \VLOD \to \VLOD$, $\BLOD_h \colon U \to \VLOD$ and $\CLOD_h \colon \VLOD \to Z$ are given by
\begin{align*}
 \iprod{\ALOD_h v}{w} = \iprod{\Ac v}{w}, \; \iprod{\BLOD_h u}{w}_U = \iprod{\Bc u}{w}_U \; \text{and} \; \iprod{\CLOD_h v}{z}_Z = \iprod{\Cc u}{z}_Z
\end{align*}
for all $v, w \in \VLOD$, $u \in U$ and $z \in Z$. Similar to \cref{eq:fine_coarse_operator_relations} we have
\begin{equation}\label{eq:fine_LOD_operator_relations}
  \ALOD_h = (\IdLODh)^* \Ac_h  \IdLODh, \quad \BLOD_h = (\IdLODh)^* \Bc_h \quad \text{and} \quad \CLOD_h = \Cc_h \IdLODh.
\end{equation}
The natural $V_h$-extension of $\XLOD_h$ is given by $\IdLODh \XLOD_h (\IdLODh)^*$, similar to the $X_H$-case.

Since $\VLOD$ has the same dimension as $V_H$, there is a lower-dimensional representative for $\XLOD_h$, given by $\XLOD_h = R_h \XLOD_H R_h^{-1}$. By inserting $u = R_h x$ and $v = R_h y$, with $x,y \in V_H$, in \cref{eq:DRE_LOD} we see that
  \begin{align*}
    \iprod{\dot{X}^{\LOD}_H x}{R_h^* R_h y} &= \iprod{\XLOD_H x}{R_h^* \Ac_h  R_h y} + \iprod{\XLOD_H y}{R_h^* \Ac_h  R_h x} \\
&+ \iprod{\Qc\Cc_h R_h x}{\Cc_h R_h y}_Z - \iprod{\Rc^{-1} \Bc_h^* R_h \XLOD_H x}{\Bc_h^* R_h \XLOD_H y}_U  ,
  \end{align*}
and we consequently define the corrected coarse-scale operators
\begin{equation*}
  \ALOD_H = R_h^* \Ac_h  R_h, \quad \BLOD_H = R_h^* \Bc_h \quad \text{and} \quad \CLOD_H = \Cc_h R_h.
\end{equation*}

\section{Error analysis} \label{sec:errors}
In the following, $C$ denotes a generic constant which may take different values at different occasions. It may depend on the problem data and the size of the domain, but is independent of $h$ and $H$. Moreover, it does not depend on the multiscale variations of $\Ac$, i.e.\ any derivatives of $\kappa$. We start by gathering some useful results:

\subsection{Preliminaries}
Recall that $\Id_h \colon V_h \to \Ltwo$ is the identity mapping, $P_h = \Id_h^* \colon \Ltwo \to V_h$ denotes the $\Ltwo$-orthogonal projection onto $V_h$, and $P_\LOD$ is the $\Ltwo$-orthogonal projection onto $\VLOD$. We have $P_\LOD = (\IdLODh)^* P_h$, i.e.\ we first project onto $V_h$ and then onto $\VLOD$. Straightforward calculations show the following:
\begin{lemma}\label{lemma:bounded_IO}
Under \cref{ass:operators}, it holds that $\Id_h \Bc_h \in \Lc(U,\Ltwo)$, $\Cc_h P_h \in \Lc(\Ltwo,Z)$ and $S_h := \Id_h \Bc_h \Rc^{-1} \Bc_h^* \Id_h^* \in \LHH$.
  \end{lemma}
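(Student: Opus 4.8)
The plan is to verify each of the three claims by composing bounded linear operators, since boundedness is preserved under composition and under taking Hilbert adjoints. The key observation is that each claimed operator is built from the operators $\Bc$, $\Cc$, $\Rc^{-1}$ (bounded by \cref{ass:operators}) together with the various identity/projection maps, all of which are bounded with norms controlled independently of $h$ and $H$.

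First I would treat $\Id_h \Bc_h \in \Lc(U,\Ltwo)$. Recall that $\Bc_h$ is defined by the Galerkin relation $\iprod{\Bc_h u}{y} = \iprod{\Bc u}{y}$ for all $y \in V_h$, which means $\Bc_h = \Id_h^* \Bc = P_h \Bc$, i.e.\ $\Bc_h$ is the $\Ltwo$-projection of $\Bc u$ onto $V_h$. Hence $\Id_h \Bc_h = P_h \Bc$ as an operator $U \to \Ltwo$. Since $P_h$ is an orthogonal projection it satisfies $\norm{P_h} \le 1$, and $\Bc \in \Lc(U,\Ltwo)$, so $\norm{\Id_h \Bc_h} = \norm{P_h \Bc} \le \norm{\Bc}$, giving boundedness uniformly in $h$. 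The argument for $\Cc_h P_h \in \Lc(\Ltwo,Z)$ is the mirror image: the defining relation $\iprod{\Cc_h x}{z}_Z = \iprod{\Cc x}{z}_Z$ for $x \in V_h$ shows that $\Cc_h = \Cc \Id_h$ on $V_h$, so that $\Cc_h P_h = \Cc \Id_h P_h = \Cc P_h$ as a map $\Ltwo \to Z$, which is bounded by $\norm{\Cc}\norm{P_h} \le \norm{\Cc}$.

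For $S_h := \Id_h \Bc_h \Rc^{-1} \Bc_h^* \Id_h^*$ I would simply assemble the previous pieces. Writing $S_h = (\Id_h \Bc_h)\,\Rc^{-1}\,(\Id_h \Bc_h)^*$, we recognize $\Id_h \Bc_h = P_h \Bc \in \Lc(U,\Ltwo)$ from the first step, its adjoint $(\Id_h \Bc_h)^* = \Bc^* P_h \in \Lc(\Ltwo,U)$ is then automatically bounded, and $\Rc^{-1} \in \Lc(U)$ by assumption. Composing three bounded operators yields $S_h \in \Lc(\Ltwo) = \LHH$, with the operator-norm bound $\norm{S_h} \le \norm{\Bc}^2 \norm{\Rc^{-1}}$, again independent of $h$.

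These calculations are indeed "straightforward," so there is no genuine obstacle; the only point requiring care is correctly identifying $\Bc_h$ with $P_h \Bc$ (and $\Cc_h$ with $\Cc \Id_h$) from the Galerkin definitions, since it is this identification that both exhibits the operators as compositions of known bounded maps and makes the uniform-in-$h$ bounds transparent via $\norm{P_h} \le 1$. One should note that $\Bc_h$ itself maps into the finite-dimensional space $V_h$ and is trivially bounded, but the content of the lemma is that the \emph{natural extensions} to $\Ltwo$ (via $\Id_h$ and $P_h$) retain boundedness with $h$-independent constants, which is exactly what the subsequent error analysis will require.
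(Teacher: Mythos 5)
Your proof is correct and fills in exactly the ``straightforward calculations'' that the paper leaves implicit: identifying $\Bc_h = \Id_h^*\Bc = P_h\Bc$ and $\Cc_h = \Cc\,\Id_h$ from the Galerkin definitions, then composing with $\Rc^{-1} \in \Lc(U)$ and using $\norm{\Id_h P_h}_{\LHH} \le 1$ to get the $h$-independent bounds. No gaps; this is the intended argument.
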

Further, let $\exp{t\Ac_h }$ denote the solution operator to the equation $\dot{u} + \Ac_h u = 0$, i.e.\ the semigroup generated by $\Ac_h $. Similarly, $\exp{t\ALOD_h}$ is the semigroup generated by $\ALOD_h$. Because $\Ac$ generates an analytic semigroup on $\Ltwo$, these operators are analytic semigroups on $V_h$ and $V_\LOD$, respectively. More specifically, we have
\begin{lemma} \label{lemma:bounded_solop}
Under \cref{ass:operators}, the operators
\begin{equation*}
  E_h(t) = \Id_h \exp{t\Ac_h } \Id_h^* \quad \text{and} \quad
  E_\LOD(t) = \Id_h \IdLODh \exp{t\ALOD_h} (\IdLODh)^* \Id_h^*,
\end{equation*}
are both in $\LHH$ for $t \in [0,T]$, with the uniform bounds $\LHnorm{E_h(t)} \le 1$ and $\LHnorm{E_\LOD(t)} \le 1$.
\end{lemma}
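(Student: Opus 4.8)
The plan is to use that $\Ac_h$ and $\ALOD_h$ are self-adjoint and dissipative with respect to the $\Ltwo$ inner product on their finite-dimensional domains $V_h$ and $\VLOD$, so that the semigroups they generate are $\Ltwo$-contractions; the bound by $1$ then follows by viewing $E_h(t)$ and $E_\LOD(t)$ as compositions of the semigroup with inclusions and orthogonal projections, all of which are nonexpansive in $\Ltwo$.

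First I would record the structure of the generators. For $v,w \in V_h$ the identity $\iprod{\Ac_h v}{w} = \iprod{\Ac v}{w} = -a(v,w)$ together with the symmetry of $a$ shows that $\Ac_h$ is self-adjoint on $(V_h,\iprod{\cdot}{\cdot})$, while coercivity from \cref{ass:operators} gives $\iprod{\Ac_h v}{v} = -a(v,v) \le -\alpha\norm{\nabla v}^2 \le 0$. The identical computation, with $V_h$ replaced by $\VLOD$, shows that $\ALOD_h$ is self-adjoint and dissipative on $(\VLOD,\iprod{\cdot}{\cdot})$.

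Next I would convert dissipativity into the contraction property by an energy estimate. Writing $u(t) = \exp{t\Ac_h}u_0$ for the solution of $\dot{u} = \Ac_h u$, one has $\frac{\diff{}}{\dt}\tfrac{1}{2}\norm{u(t)}^2 = \iprod{\Ac_h u(t)}{u(t)} = -a(u(t),u(t)) \le 0$, so $\norm{u(t)}$ is non-increasing and $\norm{\exp{t\Ac_h}u_0} \le \norm{u_0}$ for every $t \ge 0$; the same argument on $\VLOD$ yields $\norm{\exp{t\ALOD_h}u_0} \le \norm{u_0}$.

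It then remains to chain the nonexpansive factors. The maps $\Id_h$ and $\IdLODh$ are inclusions and hence $\Ltwo$-isometries onto their ranges, whereas their adjoints $\Id_h^* = P_h$ and $(\IdLODh)^*$ are $\Ltwo$-orthogonal projections, so both have norm at most $1$. For $f \in \Ltwo$ this gives
\[
  \norm{E_h(t)f} = \norm{\exp{t\Ac_h}P_h f} \le \norm{P_h f} \le \norm{f},
\]
and analogously $\norm{E_\LOD(t)f} = \norm{\exp{t\ALOD_h}(\IdLODh)^* P_h f} \le \norm{f}$, which are exactly the asserted uniform bounds; boundedness in particular places both operators in $\LHH$. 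I do not expect a real obstacle here: the only point needing care is that every inner product, norm, and adjoint must be the $\Ltwo$ one rather than the energy inner product $a(\cdot,\cdot)$, since it is precisely this choice that simultaneously makes $\Ac_h$ dissipative, makes the inclusions isometric, and matches the $\Ltwo$ operator norm under consideration.
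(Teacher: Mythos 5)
Your proof is correct. The paper states this lemma without giving a separate proof, presenting it as an immediate consequence of the preceding remark that $\Ac$ --- and hence $\Ac_h$ and $\ALOD_h$ --- generates an analytic semigroup via the bounded, coercive form $a$; your argument (dissipativity of $\Ac_h$ and $\ALOD_h$ from the coercivity of $a$, yielding $\Ltwo$-contraction semigroups, chained with the nonexpansive inclusions $\Id_h$, $\IdLODh$ and the orthogonal projections $\Id_h^*$, $(\IdLODh)^*$) is exactly the standard reasoning the paper is implicitly invoking, so you have simply made explicit what the authors leave to the reader.
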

By arguing as in~\cite{MalqvistPersson_2015}, but for the (simpler) semi-discrete case, we have (choosing $k \sim \log H)$
\begin{lemma} \label{lemma:parabolic_LOD}
For $t \in (0,T]$ it holds that
 \begin{equation*}
   \LHnorm{E_h(t) - E_\LOD(t)} \le C H^2 t^{-1}.
 \end{equation*}
Here, the constant $C$ depends on $T$, $\alpha$, $\beta$, and $\gamma$, but not on the multiscale variations of $\Ac$.
\end{lemma}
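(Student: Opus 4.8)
The plan is to bound the difference of the two semigroup-based operators by comparing them on the level of the underlying parabolic evolution problems, exploiting the fact that $E_h(t)$ and $E_\LOD(t)$ are exactly the solution operators for the FEM and LOD discretizations of $\dot{u} + \Ac u = 0$. Writing the excerpt's parenthetical cue ``arguing as in~\cite{MalqvistPersson_2015}, but for the (simpler) semi-discrete case'' as a guide, I would first fix an arbitrary $v \in \Ltwo$ and set $w_h(t) = \Id_h \exp{t\Ac_h} \Id_h^* v$ and $w_\LOD(t) = \Id_h \IdLODh \exp{t\ALOD_h} (\IdLODh)^* \Id_h^* v$. These are the solutions of the semi-discrete parabolic problems posed in $V_h$ and $\VLOD$ respectively, with initial data $P_h v$ and $P_\LOD v$. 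The quantity to estimate is then $\norm{w_h(t) - w_\LOD(t)}$, and the goal is to show it is bounded by $C H^2 t^{-1} \norm{v}$, uniformly in the multiscale variation; the operator-norm bound on $E_h(t) - E_\LOD(t)$ then follows by taking the supremum over $\norm{v} \le 1$.

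The core mechanism is the elliptic LOD error estimate together with a parabolic-smoothing (energy) argument. First I would record the key spatial approximation property: for the elliptic problem, the LOD space $\VLOD$ (with localization parameter $k \sim \log H^{-1}$) yields an $\Ltwo$-error of order $H^2$ that is independent of $\kappa$; this is precisely the content of~\cite{MalqvistPeterseim_2014, HenningMalqvist2014}, and the logarithmic localization is what removes the error contribution from truncating the correctors to patches. Second, I would use the analyticity of the semigroup $\exp{t\Ac_h}$, already asserted before Lemma 3.2, to transfer this elliptic spatial accuracy to the parabolic setting. The standard device is to split the error via the Ritz-type projection associated with the LOD space and to use the smoothing estimate $\norm{\Ac_h \exp{t\Ac_h} P_h v} \le C t^{-1} \norm{v}$, which is exactly what produces the factor $t^{-1}$ in the claimed bound and compensates for the loss of a full derivative. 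Combining the $O(H^2)$ spatial estimate with the $t^{-1}$ smoothing factor gives the stated rate.

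Concretely, I would write $w_h - w_\LOD = (w_h - \pi w_h) + (\pi w_h - w_\LOD)$, where $\pi$ is the LOD (elliptic Ritz) projection onto $\VLOD$; the first term is controlled directly by the $O(H^2)$ elliptic estimate applied to the smooth-in-time $w_h$, while the second term solves a discrete parabolic problem in $\VLOD$ driven by the projection defect and is estimated by a Duhamel/energy argument using the uniform semigroup bound $\LHnorm{E_\LOD(t)} \le 1$ from Lemma 3.2. The main obstacle I anticipate is tracking the time singularity correctly: because the initial data $v \in \Ltwo$ is merely square-integrable rather than in $\domain{\Ac}$, one cannot differentiate freely at $t = 0$, and the $t^{-1}$ must be extracted honestly from the analytic-semigroup smoothing rather than from a regularity assumption on $v$. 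Keeping the constant independent of $\kappa$ requires that every inequality used be the $\kappa$-robust LOD estimate and the coercivity/boundedness constants $\alpha, \beta$ (and shape regularity $\gamma$), never a bound involving derivatives of $\kappa$; this robustness is the whole point of the LOD construction and is inherited from the cited elliptic theory, so I would be careful to invoke only those estimates.
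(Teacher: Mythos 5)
Your plan has the same skeleton as the paper's proof: fix $v \in \Ltwo$, set $u_h(t) = \exp{t\Ac_h}P_h v$ and $u_\LOD(t) = \exp{t\ALOD_h}P_\LOD v$, split the error through the Ritz projection $R_\LOD \colon V_h \to \VLOD$ onto the LOD space (your $\pi$), control the defect $\rho = u_h - R_\LOD u_h$ by the $\kappa$-robust $O(H^2)$ estimates for $R_\LOD$ from \cite{MalqvistPeterseim_2014}, and generate the factor $t^{-1}$ from the analytic-semigroup regularity $\norm{D_t^l u_h(t)} \le C t^{-l}\norm{v}$. All of these ingredients appear, with the same roles, in the paper's argument.

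The gap is in the step where you bound $\theta = \pi u_h - u_\LOD$ ``by a Duhamel/energy argument using the uniform semigroup bound $\LHnorm{E_\LOD(t)}\le 1$''. Taken literally, Duhamel's formula gives
\[
  \norm{\theta(t)} \le \norm{\theta(0)} + \int_0^t \norm{\dot\rho(s)}\,\ds ,
\]
and for initial data that is merely in $\Ltwo$ the best available bound is $\norm{\dot\rho(s)} \le CH^2\norm{\ddot u_h(s)} \le CH^2 s^{-2}\norm{v}$, so the integral diverges; moreover $\norm{\theta(0)}$ is not $O(H^2)\norm{v}$ for rough $v$. Even one integration by parts against the semigroup, using $\norm{\ALOD_h\exp{t\ALOD_h}}_{\Lc(\VLOD)}\le Ct^{-1}$, leaves an integral of $(t-s)^{-1}\norm{\rho(s)} \le CH^2 (t-s)^{-1}s^{-1}\norm{v}$, which still diverges. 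You correctly flag this nonsmooth-data singularity as the main obstacle, but you do not supply the device that overcomes it, and the smoothing estimate alone does not. The paper does supply it: it invokes the weighted nonsmooth-data estimate of \cite[Chapter 3]{Thomee2006} (also used in \cite{MalqvistPersson_2015}),
\[
  \norm{u_h(t) - u_\LOD(t)} \le Ct^{-1}\sup_{s\le t}\Big\{ s^2\norm{\dot\rho(s)} + s\norm{\rho(s)} + \Bignorm{\int_0^s \rho(r)\,\diff{r}} \Big\},
\]
whose essential feature is that the defect enters with the weights $s^2$ and $s$, and through the time integral of $\rho$ \emph{inside} the norm. Since $\int_0^s \rho(r)\,\diff{r} = (\Id - R_\LOD)\Ac_h^{-1}\big(u_h(s) - P_h v\big)$, the elliptic LOD estimate applies with a right-hand side bounded by $2\norm{v}$, so this term is $O(H^2)\norm{v}$ with no singularity, while the weighted terms absorb the $s^{-1}$ and $s^{-2}$ blow-up of $\rho$ and $\dot\rho$. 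Replacing your plain Duhamel step by this weighted duality argument (or reproducing its proof) is necessary; as written, the key step fails.
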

\begin{proof}
  We only comment briefly on the proof here.
  Let $u_h(t) = e^{t\Ac_h}P_hv$ and $u_\LOD(t)=e^{t\ALOD_h}P_\LOD v$, for $v\in \LtwoOmega$. By introducing the Ritz projection $R_{\LOD}\colon V_h \rightarrow \VLOD$ satisfying $a(R_\LOD v,w)=a(v,w)$ for all $w \in \VLOD, v \in V_h$ we get, see \cite[Chapter 3]{Thomee2006} and \cite{MalqvistPersson_2015},
  \begin{align*}
    \norm{u_h - u_\LOD} \le Ct^{-1}\sup_{s\le t} \Big\{ s^2 \norm{\dot\rho} + s \norm{\rho} + \norm{ \int_0^s\rho(r) \,\diff{r} } \Big\},
  \end{align*}
  where $\rho:=u_h - R_\LOD u_h$. From the error bounds of $R_\LOD$ in \cite{MalqvistPeterseim_2014}, see also \cite{MalqvistPersson_2015}, we get
  \begin{equation*}
    \norm{u_h - u_\LOD} \le CH^2t^{-1}\sup_{s\le t} \big(s^2 \norm{\ddot u_h(s)} + s \norm{\dot u_h(s)} + \norm{u_h(s)} + \norm{v} \big).
  \end{equation*}
  The regularity estimates $\norm{D_t^l u_h(t)} \le Ct^{-l} \norm{v}$, for $t=0,1,2$, \cite[Lemma 2.5]{Thomee2006}, completes the proof.
\end{proof}
Finally, from \cref{lemma:bounded_IO}, we get the existence and uniqueness of solutions $X_h$ and $\XLOD_h$ to the discretized DREs \cref{eq:DRE_FEM} and \cref{eq:DRE_LOD}, respectively.
Let us abbreviate
\begin{equation*}
  \Xt(t) = \Id_h X_h(t) \Id_h^* \quad \text{and} \quad
  \Yt(t) = \Id_h \IdLODh \XLOD_h (\IdLODh)^* \Id_h^*.
\end{equation*}
Then we have
\begin{lemma}\label{lemma:bounded_sol}
  There is a constant $C > 0$ which is independent of the multiscale variations of $\Ac$ but may depend on $\alpha$ and $\beta$, such that
  \begin{equation*}
    \LHnorm{\Xt(t)} + \LHnorm{\Yt(t)} \le C,
  \end{equation*}
  for $t\in [0,T]$.
\end{lemma}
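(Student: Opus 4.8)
The plan is to derive a uniform bound on the solutions of the discretized differential Riccati equations by converting each DRE into its equivalent integral (mild) form and then applying a Gr\"onwall-type argument in the $\LHH$ norm. I would treat $\Xt$ and $\Yt$ on equal footing, since both are $V_h$-extensions of solutions to structurally identical Riccati equations, differing only in the underlying solution operator ($E_h$ versus $E_\LOD$); the uniform semigroup bound $\LHnorm{E_h(t)} \le 1$ and $\LHnorm{E_\LOD(t)} \le 1$ from \cref{lemma:bounded_solop} is the key ingredient that makes the two cases interchangeable and, crucially, makes all the resulting estimates independent of the multiscale variations of $\Ac$.

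First I would write the variation-of-constants (integral) formulation for $\Xt$. Using the semigroup $E_h$ and the bounded operators from \cref{lemma:bounded_IO}, the DRE \cref{eq:DRE_FEM} becomes, for $t \in [0,T]$,
\begin{equation*}
  \Xt(t) = E_h(t) \Xt(0) E_h(t)^* + \int_0^t E_h(t-s) \big( \Cc_h^* \Qc \Cc_h P_h - \Xt(s) S_h \Xt(s) \big) E_h(t-s)^* \,\ds,
\end{equation*}
where $S_h = \Id_h \Bc_h \Rc^{-1} \Bc_h^* \Id_h^* \in \LHH$ and $\Cc_h^* \Qc \Cc_h P_h = (\Cc_h P_h)^* \Qc (\Cc_h P_h) \in \LHH$ by \cref{lemma:bounded_IO} and \cref{ass:operators}. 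Taking $\LHnorm{\cdot}$, using submultiplicativity of the operator norm and the bound $\LHnorm{E_h(t-s)} \le 1$ (and likewise for its adjoint), I obtain
\begin{equation*}
  \LHnorm{\Xt(t)} \le \LHnorm{\Xt(0)} + \int_0^t \Big( \LHnorm{\Cc_h^*\Qc\Cc_h P_h} + \LHnorm{S_h} \LHnorm{\Xt(s)}^2 \Big) \,\ds.
\end{equation*}
The initial term is bounded by $\LHnorm{X(0)} = \LHnorm{\Gc}$, since projections are norm-nonexpansive, and the two fixed operator-norm constants are finite and multiscale-independent.

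The inequality above is quadratic in $\LHnorm{\Xt(s)}$, so a direct linear Gr\"onwall lemma does not immediately apply, and this quadratic (Riccati) term is the main obstacle. The clean way around it is to exploit the known self-adjointness and, more importantly, the \emph{positivity/monotonicity} structure of the DRE: the term $-\Xt S_h \Xt$ has the right sign, so $\Xt$ is dominated by the solution of the corresponding \emph{linear} (Lyapunov) equation obtained by dropping the nonlinear term. Concretely, since $S_h$ is a nonnegative operator (being of the form $\Id_h\Bc_h\Rc^{-1}\Bc_h^*\Id_h^*$ with $\Rc^{-1} \ge 0$ under the natural assumption that $\Rc$ is positive) and $\Gc$, $\Cc_h^*\Qc\Cc_h P_h$ are nonnegative, a comparison principle for Riccati equations gives $0 \le \Xt(t) \le W(t)$ in the operator ordering, where $W$ solves the linear problem without the $-\Xt S_h \Xt$ term. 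For $W$ the integral inequality is purely linear, and the standard Gr\"onwall lemma yields $\LHnorm{W(t)} \le \big(\LHnorm{\Gc} + T\,\LHnorm{\Cc_h^*\Qc\Cc_h P_h}\big)$, a constant $C$ depending only on $T$, $\alpha$, $\beta$, and the data norms but not on the multiscale structure. Repeating the identical argument verbatim for $\Yt$, with $E_\LOD$ in place of $E_h$ and the corrected operators $\ALOD_h, \BLOD_h, \CLOD_h$ (whose relevant norms are controlled in exactly the same way via \cref{eq:fine_LOD_operator_relations}), gives the matching bound for $\LHnorm{\Yt(t)}$, and summing the two completes the proof. If one prefers to avoid invoking the operator-comparison principle, an alternative is to first establish a local-in-time bound by a fixed-point/continuation argument and then upgrade it to all of $[0,T]$ using the a priori nonnegativity of $\Xt$; but the comparison route is the most economical.
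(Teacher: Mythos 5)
The paper itself never proves \cref{lemma:bounded_sol}: it is stated without proof, as a standard consequence of Riccati theory, so there is no in-paper argument to compare against. Judged on its own, your proposal is correct and is essentially the canonical argument. You rightly observe that taking norms in the mild form gives a \emph{quadratic} integral inequality that Gr\"onwall cannot close, and the switch to the operator comparison principle is the right fix: $0 \le \Xt(t) \le W(t)$ where $W$ solves the Lyapunov equation, $\LHnorm{\Xt(t)} \le \LHnorm{W(t)}$ because the operators are self-adjoint and ordered, and $W$ is bounded \emph{directly} from its explicit mild representation using only $\LHnorm{E_h(t)}\le 1$, $\LHnorm{E_\LOD(t)}\le 1$ (\cref{lemma:bounded_solop}) and the data bounds of \cref{lemma:bounded_IO} --- note that no Gr\"onwall lemma is needed for $W$ at all, since $W$ does not appear on the right-hand side of its own mild form. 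This is exactly what delivers the actual content of the lemma, namely that $C$ is uniform in the multiscale variations of $\kappa$ and in $h$, $H$ (a soft continuity-compactness bound in $t$ would give a constant depending on $\kappa$). Equivalently, your comparison with $W$ is the value-function estimate: $\iprod{\Xt(t)x}{x}$ is dominated by the cost of the zero control. Two caveats you should make explicit. First, the argument uses $\Gc \ge 0$, $\Cc^*\Qc\Cc \ge 0$ and $\Rc > 0$, both for $\Xt(t) \ge 0$ and for $\Xt S_h \Xt \ge 0$; none of this appears in \cref{ass:operators}, but it is implicit in the LQR setting and in the existence theory of~\cite{Bensoussan_etal_2007} that the paper invokes, and some such sign condition is genuinely necessary (without it the quadratic term can produce finite-time blow-up, and the uniform bound is false), so state it as a standing hypothesis rather than a parenthetical. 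Second, two small points: your mild form places the adjoints as $E_h(t)\Xt(0)E_h(t)^*$ rather than $E_h(t)^*\Xt(0)E_h(t)$, which is harmless here only because $\Ac_h$, and hence $E_h(t)$, is self-adjoint; and the nonnegativity of $\Xt$ and the comparison principle deserve either a citation or the one-line proof via the difference $D = W - \Xt$, whose mild form $D(t) = \int_0^t E_h(t-s)^*\,\Xt(s) S_h \Xt(s)\, E_h(t-s)\,\ds \ge 0$ settles the upper comparison.
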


\subsection{Error analysis}
We are now ready for the main theorem of this paper:
\begin{theorem} \label{thm:main_error_L2}
Suppose that \cref{ass:operators} is fulfilled. Then for $t \in (0,T]$ it holds that
 \begin{equation*}
\LHnorm{\Xt(t) - \Yt(t)} \le C H^2 \big( \log (H^{-1}) +  t^{-1} \big).
\end{equation*}
Here, the constant $C$ depends on $T$, $\alpha$, $\beta$, $\gamma$, and $\LHnorm{X(0)}$, but not on the multiscale variations of $\Ac$.
\end{theorem}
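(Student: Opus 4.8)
The plan is to treat the two Riccati solutions $\Xt$ and $\Yt$ through their common representation as the fixed points of the same affine-plus-quadratic map, but driven by two different semigroups, $E_h$ and $E_\LOD$. The starting point is the mild (variation-of-constants) formulation of the DREs~\cref{eq:DRE_FEM} and~\cref{eq:DRE_LOD}. Writing $P = \Id_h \Cc_h^* \Qc \Cc_h \Id_h^*$ for the (bounded, by \cref{lemma:bounded_IO}) inhomogeneity and $S_h = \Id_h \Bc_h \Rc^{-1} \Bc_h^* \Id_h^* \in \LHH$ for the quadratic coefficient, each solution satisfies an integral equation of the schematic form
\begin{equation*}
  \Xt(t) = E_h(t) \Xt(0) E_h(t) + \int_0^t E_h(t-s) \big( P - \Xt(s) S_h \Xt(s) \big) E_h(t-s) \, \ds,
\end{equation*}
and analogously for $\Yt$ with $E_\LOD$ in place of $E_h$. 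I would first record that all the relevant factors ($\Xt(0)$, $P$, $S_h$, and the solutions themselves) live in $\LHH$ with multiscale-independent bounds, invoking \cref{lemma:bounded_IO} and \cref{lemma:bounded_sol}.

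Subtracting the two integral equations and setting $D(t) := \Xt(t) - \Yt(t)$, I would split the right-hand side into two types of contributions: \emph{semigroup-difference} terms, where the factor $E_h - E_\LOD$ appears, and \emph{solution-difference} terms, where the factor $D$ itself appears (after adding and subtracting so that each product differs in exactly one factor at a time). The semigroup-difference terms are controlled directly by \cref{lemma:parabolic_LOD}, which gives $\LHnorm{E_h(s) - E_\LOD(s)} \le C H^2 s^{-1}$, together with the uniform bounds $\LHnorm{E_h}, \LHnorm{E_\LOD} \le 1$ from \cref{lemma:bounded_solop}. The quadratic term is linearized by the identity
\begin{equation*}
  \Xt S_h \Xt - \Yt S_h \Yt = D \, S_h \, \Xt + \Yt \, S_h \, D,
\end{equation*}
so that $\LHnorm{S_h}$ and the uniform solution bounds turn this into a term of the form $C \int_0^t \LHnorm{D(s)} \, \ds$.

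The main technical obstacle is the integrable-but-singular factor $s^{-1}$ coming from \cref{lemma:parabolic_LOD} and from the initial-condition term $E_h(t)\Xt(0)E_h(t) - E_\LOD(t)\Xt(0)E_\LOD(t)$. For the pointwise (non-integrated) initial term this produces a genuine $C H^2 t^{-1}$ contribution, which is exactly the $t^{-1}$ appearing in the statement. Inside the integral, however, one encounters $\int_0^t (t-s)^{-1} \, \ds$, which diverges; the resolution is to exploit the boundedness of the two semigroups to redistribute the singularity — rather than estimating $E_h(t-s) - E_\LOD(t-s)$ by $C H^2 (t-s)^{-1}$ naively, I would use that the difference is also uniformly bounded by $2$, and integrate the truncated quantity $\min\{2, C H^2 (t-s)^{-1}\}$. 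Splitting the integral at the crossover time $s = t - C H^2$ yields a bound of order $H^2 \log(H^{-1})$, which is precisely the source of the logarithmic factor in the theorem. After assembling the semigroup-difference contributions (giving $C H^2 (\log(H^{-1}) + t^{-1})$) and the solution-difference contribution (giving $C \int_0^t \LHnorm{D(s)} \, \ds$), I would close the argument with a Gr\"onwall inequality in $\LHnorm{D(t)}$, absorbing the $t^{-1}$ singularity by a standard singular-Gr\"onwall (Henry-type) lemma so that the final constant depends on $T$ but the order $H^2(\log(H^{-1}) + t^{-1})$ is preserved.
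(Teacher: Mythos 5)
Your proposal is correct and takes essentially the same approach as the paper's proof: the integral (mild) formulation of both discrete DREs, the telescoping decomposition into semigroup-difference terms (controlled by \cref{lemma:bounded_solop,lemma:parabolic_LOD}) and solution-difference terms (controlled by \cref{lemma:bounded_IO,lemma:bounded_sol} after linearizing the quadratic term), the splitting of the singular integrals at the crossover $s = t - H^2$ to produce the $H^2\log(H^{-1})$ factor, and a concluding Gr\"onwall argument. The only difference is cosmetic: where you invoke a Henry-type singular Gr\"onwall lemma, the paper invokes the standard one, the non-integrable $t^{-1}$ inhomogeneity being harmless in either formulation because $\LHnorm{\Xt(s) - \Yt(s)}$ is uniformly bounded by \cref{lemma:bounded_sol}, so the singular part of the forcing can be truncated before integrating.
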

\begin{proof}
We utilize the integral form of  \cref{eq:DRE_FEM}. If $X_h$ solves \cref{eq:DRE_FEM} then it satisfies
\begin{equation} \label{eq:DRE_integral}
  X_h(t) = \exp{t\Ac_h^*} X_h(0) \exp{t\Ac_h } + \int_{0}^{t}{\exp{(t-s)\Ac_h^*}\Big( \Cc_h^*\Qc\Cc_h - X_h(s) \Bc_h \Rc^{-1} \Bc_h^* X_h(s) \Big) \exp{(t-s)\Ac_h }\,\ds}.
\end{equation}
(see e.g.~\cite[Chapter IV:3, Proposition 2.1]{Bensoussan_etal_2007}).
Recalling that  $\Id_h^*\Id_h$ and $(\IdLODh)^*\IdLODh$ are the identity operators on $V_h$ and $V_\LOD$, respectively, and using \cref{eq:fine_LOD_operator_relations} therefore shows that
\begin{align*}
  \Xt(t) &= E_h(t)^* \Xt(0) E_h(t) \\
         &\quad+ \int_{0}^{t}{E_h(t-s)^* \Big( (\Cc_hP_h)^*\Qc\Cc_hP_h - \Xt(s) S_h \Xt(s) \Big) E_h(t-s) \,\ds},
\end{align*}
as well as
\begin{align*}
  \Yt(t) &= E_\LOD(t)^* \Xt(0) E_\LOD(t) \\
         &\quad+ \int_{0}^{t}{E_\LOD(t-s)^* \Big( (\Cc_hP_h)^*\Qc\Cc_hP_h - \Yt(s) S_h \Yt(s) \Big) E_\LOD(t-s) \,\ds}.
\end{align*}

(Note the $\Xt(0)$ in the first term, since we suppose $X^\LOD_h(0) = (\IdLODh)^* X_h(0) \IdLODh$.)

Subtracting these expressions yields
\begin{align*}
  \Xt(t) - \Yt(t) &= E_h(t)^*\Xt(0) \Big( E_h(t) - E_\LOD(t) \Big) + \Big( E_h(t) - E_\LOD(t)  \Big)^* \Xt(0) E_\LOD(t) \\
&\quad + \int_0^t E_h(t-s)^* (\Cc_hP_h)^*\Qc(\Cc_hP_h) \Big( E_h(t-s) - E_\LOD(t-s) \Big) \\
&\qquad + \Big( E_h(t-s) - E_\LOD(t-s)  \Big)^*(\Cc_hP_h)^*\Qc(\Cc_hP_h) E_\LOD(t-s) \\
&\qquad + \Big( E_h(t-s) - E_\LOD(t-s)  \Big)^* \Xt(s) S_h \Xt(s) E_h(t-s) \\
&\qquad + E_\LOD(t-s)^* \Xt(s) S_h \Xt(s) \Big( E_h(t-s) - E_\LOD(t-s)  \Big) \\
&\qquad + E_\LOD(t-s)^* \Big( \Xt(s) - \Yt(s) \Big) S_h \Xt(s) E_\LOD(t-s) \\
&\qquad + E_\LOD(t-s)^* \Yt(s) S_h \Big( \Xt(s) - \Yt(s) \Big) E_\LOD(t-s) \,\ds \\
&=: R_1 + R_2 + \int_0^t \sum_{j=3}^8{R_j(s)} \, \ds,
\end{align*}
so that
\begin{equation*}
  \LHnorm{\Xt(t) - \Yt(t)} \le \LHnorm{R_1} + \LHnorm{R_2} + \int_0^t \bigLHnorm{ \sum_{j=3}^8{R_j(s)}} \, \ds.
\end{equation*}

We observe that for all $G \colon \Ltwo \to Y$ (with a generic Hilbert space $Y$) it holds that $\norm{G}_{\Lc(\Ltwo, Y)} = \norm{G^*}_{\Lc(Y, \Ltwo)}$. Thus, using \cref{lemma:bounded_solop,lemma:parabolic_LOD,lemma:bounded_sol} we get
\begin{equation*}
  \LHnorm{R_1} = \LHnorm{R_2} \le C H^2 t^{-1} \LHnorm{\Xt(0)} \le C H^2 t^{-1} ,
\end{equation*}
Additionally using \cref{lemma:bounded_IO} shows that the last two integrands satisfy
\begin{align*}
  \LHnorm{R_7(s) + R_8(s)} &\le C \Big(\LHnorm{\Xt(s)} + \LHnorm{\Yt(s)} \Big)  \LHnorm{\Xt(s) - \Yt(s)} \\
                                    &\le C \LHnorm{\Xt(s) - \Yt(s)}.
\end{align*}
Due to the singularity at $s=t$ in the bound on $\LHnorm{E_h(t-s) - E_\LOD(t-s)}$, we split the integrals of the remaining $R_j$-terms into two parts. For $R_3$, we find
\begin{align*}
\int_{0}^{t} \LHnorm{R_3(s)} \, \ds &\le \int_{0}^{t - H^2} C\LHnorm{\Cc_hP_h}^2H^2 (t-s)^{-1} \, \ds + \int_{t - H^2}^{t} 2\LHnorm{\Cc_hP_h}^2 \, \ds  \\
&\le CH^2 \big(\log t - 2\log H \big) + CH^2 \\
&\le C H^2 \big( \log (H^{-1}) +  t^{-1} \big),
\end{align*}
where we have used $t \le T$ for the crude estimate $\log t \le C t^{-1}$, since a $t^{-1}$-term already appears in the bounds of $R_1$ and $R_2$.
The same bound holds for $R_4$, and, by \cref{lemma:bounded_IO} and \cref{lemma:bounded_sol}, also for $R_5$ and $R_6$. In conclusion, we thus have
\begin{equation*}
  \LHnorm{\Xt(t) - \Yt(t)} \le  CH^2 \big( \log (H^{-1}) +  t^{-1} \big) + C \int_0^t  \LHnorm{\Xt(s) - \Yt(s)} \,\ds,
\end{equation*}
which by Gr\"{o}nwall's lemma yields the statement of the theorem.
\end{proof}

\begin{remark}
  In the common situation that $X(0) = 0$, corresponding to the case of no final state penalization, the $t^{-1}$-singularity disappears.
\end{remark}
\begin{remark}
  We note that a bound of the same form has been shown in~\cite{KrollerKunisch1991} for the FEM error. However, the error constant then depends on the variations in $\kappa$, and one does not observe the given convergence order until $H < \epsilon$.
\end{remark}

Similar to the parabolic case, the error bound becomes less singular near $t=0$ if we measure in the $V$-norm. To prove this we need the following, slightly stronger, assumptions on the operators (cf. \cref{ass:operators}):

\begin{assumption} \label{ass:operators2}
  In addition to \cref{ass:operators}, $\Bc \in \Lc(U, V)$, $\Cc \in \Lc(V, Z)$, and $X(0) = \Gc \in \Lc(V)$. Moreover, we assume that the mesh $\Tc_h$ is of a form such that $P_h$ is stable in the $V$-norm.
\end{assumption}
\begin{remark}
In particular, quasi-uniform meshes satisfy \cref{ass:operators2}. We refer to~\cite{Bank2014} for a discussion on more general permissible meshes.
\end{remark}

\begin{theorem}
  Suppose that \cref{ass:operators2} is fulfilled. For $t \in (0,T]$ it holds that
  \begin{equation*}
    \LVnorm{\Xt(t) - \Yt(t)} \le C H t^{-1/2}.
  \end{equation*}
  Here, the constant $C$ depends on $T$, $\alpha$, $\beta$, $\gamma$, and $\LVnorm{X(0)}$, but not on the multiscale variations of $\Ac$.
\end{theorem}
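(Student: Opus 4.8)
The plan is to mirror the proof of \cref{thm:main_error_L2}, replacing the $\LHH$-norm by the $\LVV$-norm throughout. I would again start from the integral (variation-of-constants) form of \cref{eq:DRE_FEM}, apply it to both $\Xt$ and $\Yt$, and subtract to obtain exactly the same decomposition $\Xt(t)-\Yt(t) = R_1 + R_2 + \int_0^t \sum_{j=3}^8 R_j(s)\,\ds$. The whole argument then reduces to re-establishing, now in the energy norm, the three structural ingredients used in the $L^2$ proof: (i) uniform bounds $\LVnorm{E_h(t)}, \LVnorm{E_\LOD(t)} \le C$ on $[0,T]$ (the $V$-analogue of \cref{lemma:bounded_solop}); (ii) the parabolic LOD estimate in the energy norm; and (iii) uniform bounds $\LVnorm{\Xt(t)}, \LVnorm{\Yt(t)} \le C$ together with $\Cc_hP_h \in \Lc(V,Z)$ and $S_h \in \LVV$ (the $V$-analogues of \cref{lemma:bounded_sol,lemma:bounded_IO}). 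Ingredients (i) and (iii) are precisely where the stronger hypotheses of \cref{ass:operators2} enter: $\Bc \in \Lc(U,V)$, $\Cc \in \Lc(V,Z)$ and $X(0) \in \Lc(V)$, combined with the assumed $V$-stability of $P_h$, are exactly what keeps the extended operators bounded on $V$ (e.g.\ $\Xt(0) = P_h X(0) P_h$ has $\LVnorm{\Xt(0)} \le C\LVnorm{X(0)}$). For (i) I would use that $a$ is symmetric, so $\Ac_h$ and $\ALOD_h$ are self-adjoint and the energy $a(u,u)$ is non-increasing along $\dot u + \Ac_h u = 0$; with coercivity this yields the uniform $V$-bound. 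Self-adjointness also gives $E_h(t)^* = E_h(t)$ and $E_\LOD(t)^* = E_\LOD(t)$, so the adjoint-norm identity $\norm{G}_{\Lc(\Ltwo,Y)} = \norm{G^*}_{\Lc(Y,\Ltwo)}$ exploited in the $L^2$ proof is replaced by the simpler fact that each factor and its adjoint carry the same $\LVV$-norm.

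The essential new ingredient is the energy-norm version of \cref{lemma:parabolic_LOD}, namely $\LVnorm{E_h(t)-E_\LOD(t)} \le CHt^{-1/2}$. I would obtain this by repeating the argument sketched for \cref{lemma:parabolic_LOD} (following~\cite{MalqvistPersson_2015} and~\cite[Chapter 3]{Thomee2006}) but measuring $u_h - u_\LOD$ in the $V$-norm: the Ritz-projection error for $R_\LOD$ is of order $H$ in the energy norm rather than $H^2$ in $\Ltwo$, and the regularity estimates $\norm{D_t^l u_h(t)} \le Ct^{-l}\norm{v}$ then produce the single power $t^{-1/2}$ in place of $t^{-1}$. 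This is the step I expect to be the main obstacle, since it requires the energy-norm LOD/Ritz duality estimates and careful bookkeeping of the temporal singularity, and it must be carried out so that the constant stays independent of the multiscale variations of $\kappa$.

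With these ingredients in hand, the term-by-term estimates proceed as before. The terms $R_1$ and $R_2$ each contain a single factor $E_h(t)-E_\LOD(t)$ and otherwise only $\LVV$-bounded operators, so $\LVnorm{R_1}, \LVnorm{R_2} \le C\LVnorm{\Xt(0)}Ht^{-1/2} \le CHt^{-1/2}$; these furnish the claimed singularity. The integral terms $R_3$--$R_6$ likewise contain exactly one factor $E_h(t-s)-E_\LOD(t-s)$, so their integrands are bounded by $CH(t-s)^{-1/2}$. In contrast to the $L^2$ proof, the milder $t^{-1/2}$ singularity is now integrable directly, without splitting the integral: since $\int_0^t (t-s)^{-1/2}\,\ds = 2t^{1/2}$, we get $\int_0^t \LVnorm{R_3(s)}\,\ds \le CHt^{1/2} = CHt^{-1/2}\,t \le CTHt^{-1/2}$, using $t \le T$. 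The remaining terms $R_7$ and $R_8$ contain no singular factor, only the difference $\Xt(s)-\Yt(s)$ multiplied by operators bounded uniformly on $[0,T]$ (including at argument $0^+$), so that $\LVnorm{R_7(s)+R_8(s)} \le C\LVnorm{\Xt(s)-\Yt(s)}$.

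Collecting all contributions then yields
\begin{equation*}
  \LVnorm{\Xt(t)-\Yt(t)} \le CHt^{-1/2} + C\int_0^t \LVnorm{\Xt(s)-\Yt(s)}\,\ds,
\end{equation*}
and an application of Gr\"onwall's lemma gives the stated bound.
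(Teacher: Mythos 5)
Your proposal is correct and follows essentially the same route as the paper's own proof: the identical eight-term decomposition from the integral form, the energy-norm analogue $\LVnorm{E_h(t)-E_\LOD(t)} \le CHt^{-1/2}$ of \cref{lemma:parabolic_LOD} as the key new ingredient, the operator bounds on $(\Cc_h P_h)^*\Qc(\Cc_h P_h)$ and $S_h$ drawn from \cref{ass:operators2} and the $V$-stability of $P_h$, direct integration of the now-integrable $(t-s)^{-1/2}$ singularity in $R_3$--$R_6$ without splitting, and Gr\"onwall to close. If anything, you are slightly more careful than the paper in explicitly listing the uniform $\LVV$-bounds on $\Xt(t)$ and $\Yt(t)$ among the required ingredients, which the paper uses implicitly when estimating $R_5$--$R_8$.
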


\begin{proof}
  We start by noting that $\norm{\Id_h}_{\Lc(V_h,V)}\le 1$. Furthermore, since $P_h$ is stable in the $V$-norm, the following bound holds
  \begin{align*}
    \norm{P_h}_{\Lc(V,V_h)} &= \sup_{v \in V} \frac{\norm{P_hv}_{V}}{\norm{v}_{V}} \le \sup_{v \in V} \frac{C\norm{v}_{V}}{\norm{v}_{V}} \le C.
  \end{align*}
	
  Now, note that if the initial data $v \in V$ then we may instead of \cref{lemma:parabolic_LOD} prove the following, less singular, error bound
  \begin{align*}
    \LVnorm{E_h(t) - E_\LOD(t)} \le C H t^{-1/2}.
  \end{align*}
  In addition, parabolic regularity gives the bounds $\LVnorm{E_h(t)},\LVnorm{E_\LOD(t)}\le C$.

  Note that $\norm{\Cc_h}_{\Lc(V_h, Z)} \le \norm{\Cc}_{\Lc(V, Z)}$, so from \cref{ass:operators2} it follows that
  \begin{equation}\label{operator_bound1}
    \begin{aligned}
      \LVnorm{(\Cc_hP_h)^* \Qc &(\Cc_hP_h)} \\
      &\le \norm{P_h^*}_{\Lc(V_h,V)}\norm{\Cc_h^*}_{\Lc(Z,V_h)} \norm{\Qc}_{\Lc(Z)} \norm{\Cc_h}_{\Lc(V_h,Z)}\norm{P_h}_{\Lc(V,V_h)}\le C.
    \end{aligned}
  \end{equation}
  Similarly, $\norm{\Bc_h}_{\Lc(U,V_h)} \le \norm{\Bc}_{\Lc(U,V)}$, and we have
  \begin{equation}\label{operator_bound2}
    \begin{aligned}
      \LVnorm{S_h} &\le \norm{\Id_h}_{\Lc(V_h,V)}\norm{\Bc_h}_{\Lc(U,V_h)}\norm{\Rc^{-1}}_{\Lc(U)}\norm{\Bc_h^*}_{\Lc(V_h,U)}\norm{\Id_h^*}_{\Lc(V,V_h)}\\
                   &\le C.
    \end{aligned}
  \end{equation}
	
  As in the proof of \cref{thm:main_error_L2} we can write the difference $\Xt(t)-\Yt(t)$ as a sum of eight terms so that
  \begin{align*}
    \LVnorm{\Xt(t)-\Yt(t)} \le \LVnorm{R_1} + \LVnorm{R_2} + \int_0^t \sum_{j=3}^8\LVnorm{R_j} \,\ds.
  \end{align*}
  For $R_1$ we have
  \begin{align*}
    \LVnorm{R_1} &\le \LVnorm{E_h(t)^*}\LVnorm{\Xt(0)}\LVnorm{E_h(t)-E_\LOD(t)} \\
                 &\le CHt^{-1/2}\LVnorm{X(0)} \le CHt^{-1/2},
  \end{align*}
  and similarly we prove $\LVnorm{R_2}\le CHt^{-1/2}$, where we have used that
  \begin{align*}
    \LVnorm{X_h(0)} = \LVnorm{\Id_h^* X(0) \Id_h} \le C\LVnorm{X(0)},
  \end{align*}
  which is bounded due to \cref{ass:operators2}. Using the bounds \cref{operator_bound1,operator_bound2} we get
  \begin{align*}
    \int_0^t\sum_{j=3}^6\LVnorm{R_j} \,\ds \le \int_0^tCH(t-s)^{-1/2} \,\ds \le CHt^{1/2},
  \end{align*}
  and
  \begin{align*}
    \int_0^t\LVnorm{R_7} + \LVnorm{R_8} \,\ds \le \int_0^tC\LVnorm{\Xt(s) - \Yt(s)} \,\ds.
  \end{align*}
  By applying Gr\"{o}nwall's lemma we obtain the desired error bound.
\end{proof}

\section{Matrix-valued formulation} \label{sec:matrices}
To perform actual computations, we write the finite-dimensional equations on matrix form by expressing the equations in the FEM or LOD bases.
To this end, let the function $x \in V_h$ and the operator $X_h \colon V_h \to V_h$ have the vector and matrix representations $\xb \in \R^{N_h}$ and $\tilde{\Xb}^h \in \R^{N_h \times N_h}$, i.e.
\begin{equation}\label{eq:Xh_formula1}
  x = \sum_{j=1}^{N_h}{\xb_j \phi^h_j} \quad \text{and } \quad X_h x = \sum_{i,j=1}^{N_h}{ \tilde{\Xb}^h_{i,j} \xb_j \phi^h_i}
\end{equation}
Since exactly the same results hold for $V_H$ upon replacing $h$ by $H$, we frequently omit the $h$ sub- and superscripts in the following manipulations. They will be reinstated later when we compare different discretizations.
 The coordinates satisfy
\begin{equation*}
  \Mb \xb = \big(\iprod{x}{\phi_i} \big)_{i=1}^{N} \quad \text{and } \quad \Mb \tilde{\Xb} = \big(\iprod{X_h \phi_j}{\phi_i} \big)_{i,j=1}^{N},
\end{equation*}
 where $\Mb$ denotes the (symmetric) mass matrix, $\Mb_{i,j} = \iprod{\phi_j}{\phi_i}$. Unfortunately, we will not recover the usual form of the matrix-valued DRE when working in these coordinates. Therefore, we perform the change of variables
 \begin{equation*}
  \Xb \Mb = \tilde{\Xb}.
 \end{equation*}
Coincidentally, this means that we actually have
\begin{equation} \label{eq:Xh_formula2}
  X_h x = \sum_{i,j=1}^N{\Xb_{i,j}\iprod{x}{\phi_j} \phi_i}.
\end{equation}

\Cref{eq:DRE_FEM} is equivalent to
\begin{align} \label{eq:DRE_FEM_basis}
  \iprod{\dot{X}_h \phi_i}{\phi_j} &= \iprod{X_h \phi_i}{\Ac_h  \phi_j} + \iprod{X_h \phi_j}{\Ac_h \phi_i} \\
&+ \iprod{\Qc\Cc_h \phi_i}{\Cc_h \phi_j}_Z - \iprod{\Rc^{-1} \Bc_h^* X_h \phi_i}{\Bc_h^* X_h \phi_j}_U   \nonumber
\end{align}
for $1 \le i,j \le N$, and since $X_h \phi_i = \sum_{k=1}^N{(\Xb \Mb)_{k,i} \phi_k}$, the first term becomes
\begin{equation*}
\sum_{k = 1}^{N}{(\dot{\Xb} \Mb)_{k,i}} \Mb_{j,k} = (\Mb \dot{\Xb} \Mb)_{j,i}
\end{equation*}
Likewise, with the (negative) stiffness matrix $\Ab_{i,j} = \iprod{\Ac\phi_j}{\phi_i}$, the second and third terms become
\begin{equation*}
\sum_{k=1}^N{(\Xb\Mb)_{k,i} \Ab_{k,j}} + \sum_{k=1}^N{(\Xb\Mb)_{k,j} \Ab_{k,i}}  = (\Ab^T\Xb\Mb)_{j,i} + (\Mb\Xb\Ab)_{j,i},
\end{equation*}
due to the symmetry of $\Mb$ and $\Xb$. (Recall that we search for a self-adjoint operator $X_h$.)
Finally, the last two terms can be written
\begin{equation*}
    (\Cb^T \Qb \Cb)_{j,i} \quad \text{and } \quad \big( \Mb \Xb \Bb \Rb^{-1} \Bb^T  \Xb \Mb \big)_{j,i},
\end{equation*}
where $\Bb_{i,j} = \iprod{\Bc\phi^U_j}{\phi_i}$, $\Qb_{i,j} = \iprod{\Qc\phi^Z_j}{\phi^Z_i}$, $\Rb_{i,j} = \iprod{\Rc\phi^U_j}{\phi^U_i}$, $\Cb_{i,j} = \iprod{\Cc\phi_j}{\phi^Z_i}$ and $\{\phi^U_i\}$, $\{\phi^Z_i\}$ denote orthonormal bases for $U$ and $Z$, respectively. Summarizing, we can write the equation on matrix form as
\begin{equation} \label{eq:DRE_matrix}
\Mb \dot{\Xb} \Mb = \Mb \Xb \Ab + \Ab^T \Xb \Mb + \Cb^T \Qb \Cb - \Mb \Xb \Bb \Rb^{-1} \Bb^T \Xb \Mb.
\end{equation}

Similar to the relations between the fine and coarse operators \cref{eq:fine_coarse_operator_relations}, it is easily shown that their matrix representations satisfy
\begin{equation*}
  \Ab_H = (\IbHh)^T \Ab_h \IbHh, \quad \Bb_H = (\IbHh)^T \Bb_h, \quad \Cb_H = \Cb_h \IbHh \quad \text{and} \quad \Mb_H = (\IbHh)^T \Mb_h \IbHh,
\end{equation*}
where $\IbHh \in \R^{N_h \times N_H}$ is the prolongation matrix that satisfies $\IbHh \xb^H = \xb^h$ if $x = \sum_{j=1}^{N_H}{\xb^H_j \phi^H_j}$ and $\IdHh x = \sum_{j=1}^{N_h}{\xb^h_j \phi^h_j}$. By expressing the $\phi^H$ functions in terms of $\phi^h$, it can be seen that $(\IbHh)_{i,j} = \phi^H_j(z_i)$, where $z_i$ is the $i$:th node of $\Tc_h$. Thus the coarse systems are easily constructed when the fine system is known. Note, however, that the matrix representation of $(\IdHh)^*$ is not $(\IbHh)^T$ but  $\Mb_H^{-1} (\IbHh)^T \Mb_h$.

For the LOD case, we let $\Qb_h$ and $\Rb_h = \IbHh - \Qb_h$ be the matrix representations of $Q_h$ and $R_h$, respectively. To compute them efficiently, we follow~\cite{Engwer_etal_2016}. Then
\begin{equation*}
  \XLOD_H x = \sum_{i=1}^{N_H}{ (\Xb^\LOD_H \Mb_\LOD \xb)_i \phi^H_i},
\end{equation*}
where $\Xb^\LOD_H$ is symmetric and satisfies
\begin{align*}
\Mb_\LOD \dot{\Xb}^\LOD_H \Mb_\LOD &= \Mb_\LOD \Xb^\LOD_H \Ab_\LOD + \Ab_\LOD^T \Xb^\LOD_H \Mb_\LOD \\
&+ \Cb_\LOD^T \Qb \Cb_\LOD - \Mb_\LOD \Xb^\LOD_H \Bb_\LOD \Rb^{-1} \Bb_\LOD^T \Xb^\LOD_H \Mb_\LOD ,
\end{align*}
with the matrices
\begin{equation*}
  \Ab_\LOD = \Rb_h^T \Ab_h \Rb_h, \quad \Bb_\LOD = \Rb_h^T \Bb_h, \quad \Cb_\LOD = \Cb_h \Rb_h \quad \text{and} \quad \Mb_\LOD = \Rb_h^T \Mb_h \Rb_h.
\end{equation*}
Finally, we note that if $u \in V_h$, $w \in V_H$ and $(\IdLODh)^* u = R_h w$,  then in coordinates we have $\bm{w} = \Mb_\LOD^{-1} \Rb_h^T \Mb_h \bm{u}$. This means that the matrix representation of $\IdLODh \XLOD_h (\IdLODh)^*$ is $\Rb_h \Xb^\LOD_H \Rb_h^T \Mb_h$.

\subsection{Error computation} \label{subsec:matrix_errors}
We measure the quality of different approximations as the $\LHH$-normed distance to a reference approximation at the final time $T$. In order to find a matrix representation for this, we first observe that since $\norm{P_h x} \le \norm{x}$, we have
\begin{equation*}
  \LHnorm{\Id_h X_h P_h} = \sup_{\substack{x \in \Ltwo \\ x \neq 0}} \frac{\norm{X_h P_hx}}{\norm{x}} \le \sup_{\substack{x \in \Ltwo \\ x \neq 0}} \frac{\norm{X_h P_hx}}{\norm{P_hx}} = \sup_{\substack{x \in V_h \\ x \neq 0}} \frac{\norm{X_h x}}{\norm{x}}.
\end{equation*}
But $P_h x = x$ for $x \in V_h$, so since $V_h \subset \Ltwo$ we also get
\begin{equation*}
  \LHnorm{\Id_h X_h P_h} \ge \sup_{\substack{x \in V_h \\ x \neq 0}} \frac{\norm{X_h P_hx}}{\norm{x}} = \sup_{\substack{x \in V_h \\ x \neq 0}} \frac{\norm{X_h x}}{\norm{x}}.
\end{equation*}
To compute the $\LHH$-norm it is thus enough to test with $x  = \sum_{i=1}^{N_h}{\xb_i \phi^h_i} \in V_h$. Again omitting the $h$ sub- and superscripts, we have that
$\iprod{x}{x} = \xb^T \Mb \xb$, and similarly
\begin{align*}
  \iprod{X_h x}{X_h x} &= \sum_{i,j,k,l = 1}^N{(\Xb\Mb)_{i,j}\xb_j(\Xb\Mb)_{k,l}\xb_l\iprod{\phi_i}{\phi_k}}\\
 &= \xb^T \Mb^T \Xb^T \Mb \Xb \Mb \xb.
\end{align*}
Since $\Mb$ is symmetric positive definite, we may do a Cholesky factorization $\Mb = \Lb_{\Mb} \Lb_{\Mb}^T$, and the change of variables $\yb = \Lb_{\Mb}^T \xb$ yields
\begin{equation*}
  \LHnorm{\Id_h X_h P_h} = \sup_{\substack{\yb \in \R^N \\ \yb \neq 0}} \frac{\big(\yb^T \Lb_{\Mb}^T \Xb \Lb_{\Mb} \Lb_{\Mb}^T \Xb \Lb_{\Mb} \yb \big)^{1/2}}{\big(\yb^T \yb\big)^{1/2}} = \norm{\Lb_{\Mb}^T \Xb \Lb_{\Mb}}_{\R^{N \times N}},
\end{equation*}
where $\norm{\cdot}_{\R^{N \times N}}$ denotes the standard spectral matrix norm.
Recalling the matrix representation $\IbHh$ of $\IdHh$, we now get that
\begin{equation*}
  \LHnorm{\Id_h X_h P_h - \Id_H X_H P_H} = \norm{\Lb_{\Mb}^T (\Xb_h - \IbHh \Xb_H (\Ib_H^h)^T) \Lb_{\Mb}}_{\R^{N \times N}}.
\end{equation*}
The LOD error is completely analogous, using instead $\Rb_h$ and $\Xb^\LOD_H$.

A similar approach also allows us to compute $\LVV$-errors. Let $\Ab = \Lb_{\Ab} \Lb_{\Ab}^T$ be a Cholesky factorization of $\Ab$. Then
\begin{equation*}
  \LVnorm{\Id_h X_h P_h - \Id_H X_H P_H} \le \norm{P_h}_{\Lc(V,V_h)} \norm{\Lb_{\Ab}^T (\Xb_h - \IbHh \Xb_H (\Ib_H^h)^T) \Mb \Lb_{\Ab}^{-T}}_{\R^{N \times N}}.
\end{equation*}
We also get that $\LVnorm{\Id_h X_h P_h - \Id_H X_H P_H}$ is bounded from below by $\norm{\Lb_{\Ab}^T (\Xb_h - \IbHh \Xb_H (\Ib_H^h)^T) \Mb \Lb_{\Ab}^{-T}}_{\R^{N \times N}}$, i.e.\ the latter quantity can be thought of as an equivalent norm.
Since $\Lb_{\Ab}$ is triangular, the extra cost required for the computation of $\Lb_{\Ab}^{-T}$ is negligible. If the low-rank formulation is used (see \cref{subsec:lowrank_errors}), only a small number of linear equation systems involving $\Lb_{\Ab}$ needs to be solved, reducing the cost even further.

\section{Temporal discretization} \label{sec:time}
We discretize the matrix-valued DREs in time by means of a low-rank splitting scheme, since the basic operation in such methods is the application of $\exp{t\Ab^T}$, i.e.\ essentially solving a parabolic problem. Let $\tau$ denote a fixed time step, and let $t_j = j\tau$, $j = 0, \ldots, N_t$, be the time discretization of the interval $[0, T]$. We split Equation \cref{eq:DRE_matrix} into two parts, $\dot{\Xb} = \Fs \Xb + \Gs \Xb$, where
\begin{equation*}
  \Fs \Xb = \Xb \Ab \Mb^{-1} + \Mb^{-1} \Ab^T \Xb + \Mb^{-1}\Cb^T \Qb \Cb \Mb^{-1} \quad \text{and} \quad \Gs \Xb = -\Xb \Bb \Rb^{-1} \Bb^T \Xb.
\end{equation*}
Then the Strang splitting approximation at time $t_j$ is given by $\Xb^j$, with $\Xb^0 = \Xb(0)$ and
\begin{equation*}
  \Xb^{j+1} = \exp{\frac{\tau}{2}\Fs} \exp{\tau\Gs} \exp{\frac{\tau}{2}\Fs} \Xb^j.
\end{equation*}
Here, the solution operators $\exp{t\Fs}$ and $\exp{t\Gs}$ satisfy
\begin{align}
\label{eq:split_etF}
  \exp{t\Fs} \Xb &= \exp{t\Mb^{-T}\Ab^T} \Xb \exp{t\Ab\Mb^{-1}} + \int_0^t{\exp{s\Mb^{-T}\Ab^T} \Mb^{-T} \Cb^T \Qb \Cb \Mb^{-1} \exp{s\Ab\Mb^{-1}} \,\ds}, \\
\label{eq:split_etG}
\exp{t\Gs} \Xb &= \big( I + t\Xb \Bb \Rb^{-1} \Bb^T \big)^{-1} \Xb,
\end{align}
where the first equality is apparent from the integral formulation \cref{eq:DRE_integral}, while the second is easily verified by differentiation.

The low-rank version of the method relies on the assumption that the solution $\Xb$ has low rank. This is general true for LQR problems and dramatically reduces the  computational cost. In that case, we may factorize $\Xb = \Lb \Db \Lb^T$, where $\Lb \in \R^{N_h \times r}$ and $\Db \in \R^{r \times r}$ with the rank $r \ll N_h$. Also $\exp{\tau\Fs}\Xb$ and $\exp{\tau\Gs}\Xb$, and thus the iterates $\Xb_j$, may then be factorized in such a way. After a reformulation, $\exp{\tau\Gs}\Xb$ is very cheap to compute, and the computation of $\exp{\tau\Fs}\Xb$ reduces to an evaluation of $\exp{\tau\Mb^{-T}\Ab^T} \Lb$ (plus preliminary, similar work for the integral term). The latter operation is equivalent to solving $\Mb \dot{x} = \Ab^T x $, $x(0) = \Lb$, and the matrix $\Mb$ is thus never explicitly inverted. For further details, we refer to~\cite{Stillfjord2015, Stillfjord2017}.

\subsection{Low-rank errors} \label{subsec:lowrank_errors}
Also the error computations outlined in \cref{subsec:matrix_errors} benefit from being formulated in a low-rank setting. Assume that $\Xb_h = \Lb_h\Db_h\Lb_h^T$ and $\Xb_H = \Lb_H\Db_H\Lb_H^T$ with $\Lb_h \in \R^{N_h \times r_h}$ and $\Lb_H \in \R^{N_H \times r_H}$ with $r_h, r_H \ll N_h$, and let $\Mb_h = \Lb_{\Mb} \Lb_{\Mb}^T$ be a Cholesky factorization. By setting
\begin{equation*}
  \Vb =
  \begin{bmatrix}
   \Lb_{\Mb}^T \Lb_h & \Lb_{\Mb}^T \IbHh \Lb_H
  \end{bmatrix}
  \quad \text{and} \quad
  \Db =
  \begin{bmatrix}
    \Db_{h}  &     0 \\
       0    & -\Db_{H}
  \end{bmatrix}
\end{equation*}
we see that $\Vb \in \R^{N_h \times (r_h + r_H)}$, $\Db \in \R^{(r_h + r_H) \times (r_h + r_H)}$ and it follows that
\begin{equation*}
  \Lb_{\Mb}^T (\Xb_h - \IbHh \Xb_H (\Ib_H^h)^T) \Lb_{\Mb} = \Vb \Db \Vb^T.
\end{equation*}
Since $\Vb\Db\Vb^T$ is not necessarily an eigenvalue decomposition, we cannot immediately determine the norm by inspection. However, performing a QR-factorization $\Vb = \Qb \Rb$ is cheap if the number of columns is low, and $\Rb\Db\Rb^T \in \R^{(r_h + r_H) \times (r_h + r_H)}$ can also be diagonalized cheaply. (This is precisely the $LDL^T$ column compression procedure which is applied in each time step.) We acquire $\Vb\Db\Vb^T = (\Qb\Wb)\tilde{\Db}(\Qb\Wb)^T$, for some $\Wb$, where $\norm{\Vb\Db\Vb^T} = |\tilde{\Db}_{1,1}|$.

For errors in the $\LVV$-norm, we do not get a symmetric matrix as above. But if $\Ab = \Lb_{\Ab}\Lb_{\Ab}^T$ we can still write
\begin{equation*}
  \Lb_{\Ab}^T (\Xb_h - \IbHh \Xb_H (\Ib_H^h)^T) \Mb \Lb_{\Ab}^{-T} = \Gb_1 \Db \Gb_2^T,
\end{equation*}
with the same $\Db$, and with
\begin{equation*}
  \Gb_1 =
  \begin{bmatrix}
   \Lb_{\Ab}^T \Lb_h & \Lb_{\Ab}^T \IbHh \Lb_H
  \end{bmatrix}
  \quad \text{and} \quad
  \Gb_2 =
  \begin{bmatrix}
   \Lb_{\Ab}^{-1} \Mb \Lb_h & \Lb_{\Ab}^{-1} \Mb \IbHh \Lb_H .
  \end{bmatrix}
\end{equation*}
We can cheaply $QR$-factorize both $\Gb_1 = \Ub\Rb_1$ and $\Gb_2 = \Vb\Rb_2$; this means that
\begin{equation*}
 \norm{\Lb_{\Ab}^T (\Xb_h - \IbHh \Xb_H (\Ib_H^h)^T) \Mb \Lb_{\Ab}^{-T}}_{\R^{N_h \times N_h}} = \norm{\Ub \Sb \Vb^T}_{\R^{N_h \times N_h}} = \norm{\Sb}_{\R^{N_h \times N_h}},
\end{equation*}
where $\Sb = \Rb_1 \Db \Rb_2^T$ is a small matrix.

 \section{Numerical experiments} \label{sec:experiments}
We have performed a number of numerical experiments in order to verify our a priori error bounds for the LOD discretizations, and to demonstrate their efficiency in comparison to the classical FEM.

In all experiments, we compute the relevant matrices for both FEM and LOD by using efficient code written by Fredrik Hellman and Daniel Elfverson\footnote{Available on request from Fredrik Hellman, \email{fredrik.hellman@it.uu.se}.}. These pre-solve computations were run on a Intel\textsuperscript{\textregistered} Core\texttrademark{} i5-4690 processor. We note that the localized elliptic fine-scale problems were not solved in parallel. Doing so would further improve the performance of LOD.

For approximating the solutions to the DREs, we employ in all cases the low-rank Strang splitting scheme (as described in \cref{sec:time}) with $N_t = 256$ time steps. This ensures that the temporal error is small compared to the spatial error, which is our interest here. Our implementation utilizes the DREsplit\footnote{Available on request from Tony Stillfjord, \email{tony.stillfjord@gu.se}, or from \url{http://www.tonystillfjord.net}.} library. These computations were performed on resources at Chalmers Centre for Computational Science and Engineering (C3SE) provided by the Swedish National Infrastructure for Computing (SNIC). Each simulation used a single Intel\textsuperscript{\textregistered} Xeon\textsuperscript{\textregistered} E5-2650 v3 processor.

The multiscale diffusion coefficients $\kappa$ considered in the numerical examples are of two distinct types. In Examples 1, 2, and 4 we consider a piecewise constant coefficient, generated randomly with no spatial correlation, that varies on a fine scale, see \cref{fig:diff_coeff_grid}. In Examples 3 and 5 $\kappa$ takes two values. One value in the background and one in the thin channels, see \cref{fig:diff_coeff_Ushape}. This is a common setup for reinforced (composite) materials. Both these cases are challenging for the finite element method.

\subsection{Example~1}\label{example:grid}
In this first example, we consider diffusion on the unit square. More specifically, we take $\Omega = [0,1]^2$ and set $\Ac x = \nabla \cdot \big( \kappa \nabla x \big)$ with Dirichlet boundary conditions. Here, $\kappa$ is piecewise constant on a square grid of size $2^{-7}$ and taking randomly chosen values in $[10^{-3}, 1]$; see \cref{fig:diff_coeff_grid} for an illustration. We consider $3$ independent inputs and define the input operator $\Bc$ as the sum
\begin{equation*}
  \Bc u = \sum_{j=1}^{3} \Bc_j u_j,
  \quad \text{where} \quad
  (\Bc_j u)(\xi_1, \xi_2) =
  \begin{cases}
    u,  &  \frac{j}{4} \le \xi_1, \xi_2 \le \frac{j}{4} + \frac{1}{8}\\
    0, & \text{otherwise}
  \end{cases}
.
\end{equation*}
Thus we can control the system on three small squares. As the output operator we take the mean, i.e.\ $\Cc x = \int_{\Omega}{x}$. We choose $\Qc$ and $\Rc$ to be the identity operators and take $\Gc = X(0) = 0$.
\begin{figure}[t!]
\centering
\includegraphics[width=0.6\columnwidth]{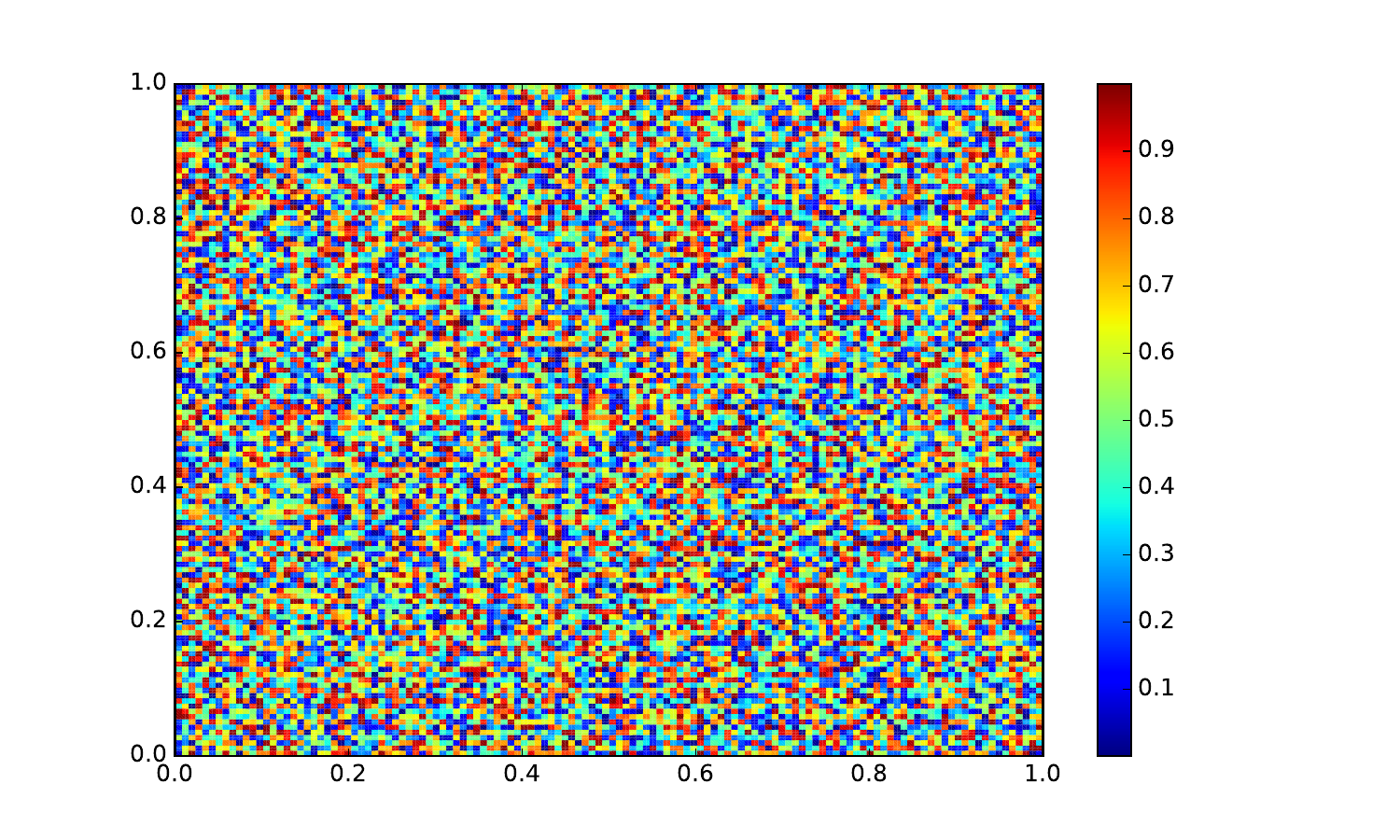}
\caption{The diffusion coefficient used in \nameref{example:grid}, plotted over the domain $\Omega$. (This figure is in color in the electronic version of the article.)}
\label{fig:diff_coeff_grid}
\end{figure}

For the discretization in space, we start with a coarse mesh containing $8$ triangles, and then refine this $6$ times, giving meshes with $2^{3 + 2j}$ triangles, for ${j = 0, \ldots, 6}$. One additional refinement provides the reference grid with $2^{17} = 131072$ triangles.
This results in matrices $\Ab_j \in \R^{n \times n}$, $\Bb_j \in \R^{n \times 3}$ and $\Cb_j \in \R^{1 \times n}$, $j = 0, \ldots, 7$, with $n = 1, 9, 49, 225, 961, 3969, 16129, 65025$ (since we only consider the interior nodes).

The approximations are compared only at the final time, in the $\LHH$- and $\LVV$-norms as outlined in \cref{subsec:matrix_errors}, and the computed errors are shown in \cref{fig:grid}. We see that the classical FEM initially struggles due to not resolving the multiscale coefficient properly, but converges with order $2$ when the mesh becomes fine enough. The LOD approach converges with order $2$ also for the coarse meshes, and additionally results in approximations that are about one order of magnitude more accurate.
The plot to the right shows the errors against the actual computation time, including the time spent on constructing the LOD bases. As can be seen, this extra effort is low enough that except for the most inaccurate cases it is always worthwhile to use the LOD approach.

\begin{figure}[t!]
\centering
\includegraphics[width=0.49\columnwidth]{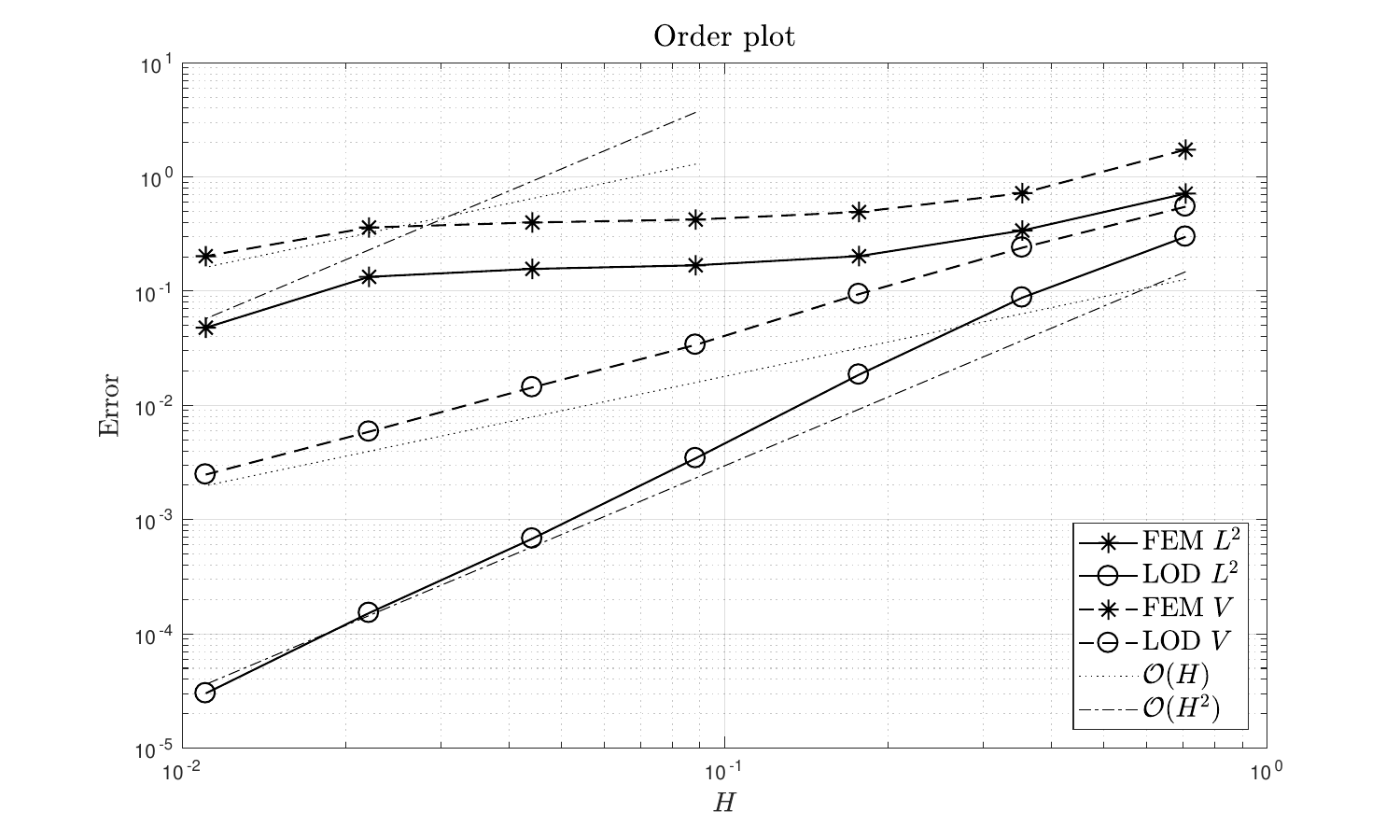}
\includegraphics[width=0.49\columnwidth]{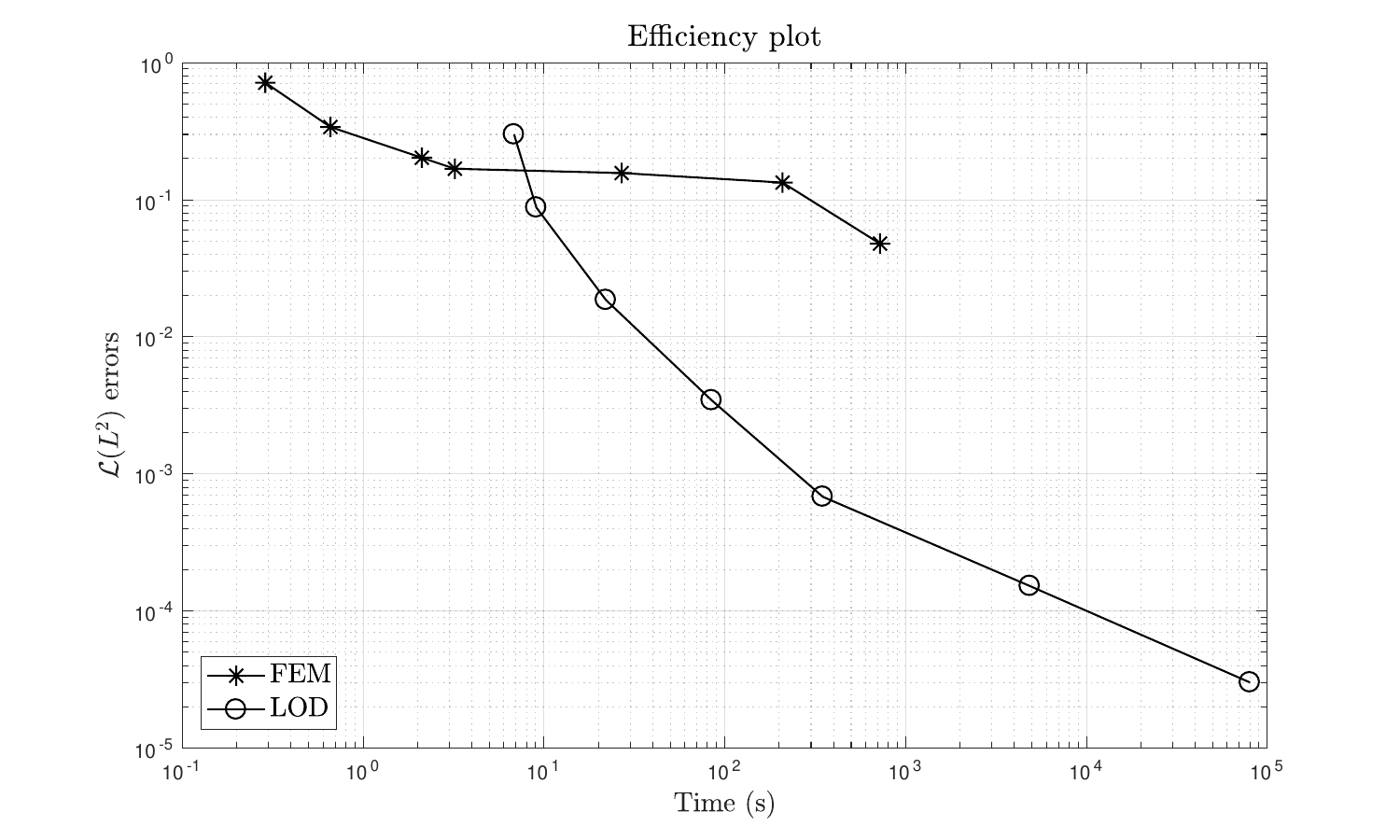}
\caption{Left: The $\LHH$- and $\LVV$-norm errors of the approximations computed in \nameref{example:grid}, plotted against the meshwidth. Right: The $\LHH$-norm errors plotted against the computation time.}
\label{fig:grid}
\end{figure}

\subsection{Example~2}\label{example:Lshape}
Here, we consider an L-shaped domain $\Omega$, where $[0.5, 1] \times [0.5, 0.5]$ has been removed from the unit square. The diffusion coefficient $\kappa$ is piecewise constant on a square grid of size $2^{-7}$ and taking random values in $[10^{-3}, 1]$.
 We use one control input, given by the characteristic function of the square $[0.65, 0.85]^2$, and one output, the mean over the square $[0.15, 0.35]^2$. The meshes are setup as in the previous example, but now with $n = 5, 33, 161, 705, 2945, 12033, 48641$ interior nodes ($n=195585$ for the reference solution). The time discretization and other parameters are the same as in the previous example.

The results are shown in \cref{fig:Lshape}. Due to the reentrant corner the errors behave more erratically than in the previous example, but LOD is still clearly first- and second-order convergent in contrast to standard FEM, which performs very poorly. We also observe that LOD is more efficient in all but the coarsest cases.

\begin{figure}[t!]
\centering
\includegraphics[width=0.49\columnwidth]{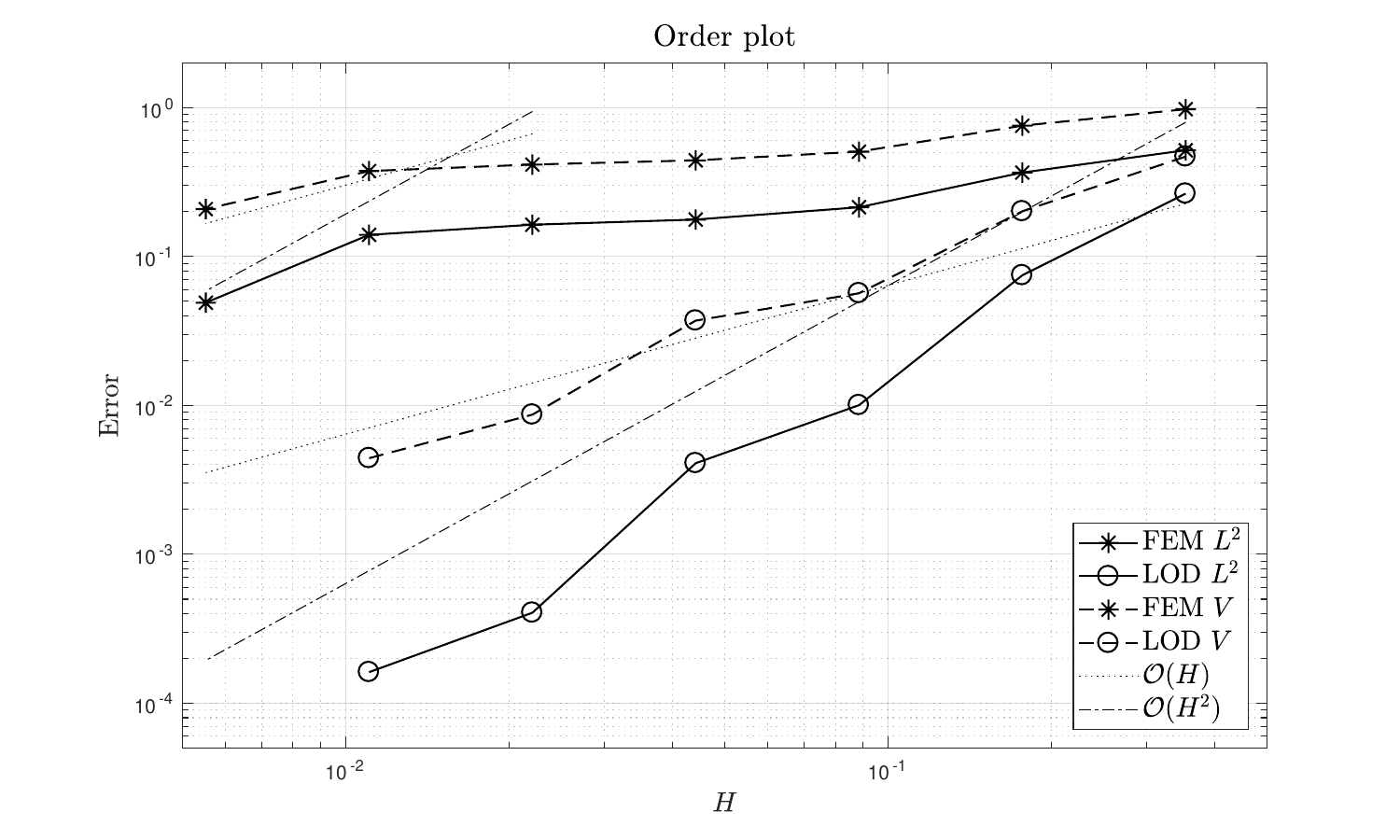}
\includegraphics[width=0.49\columnwidth]{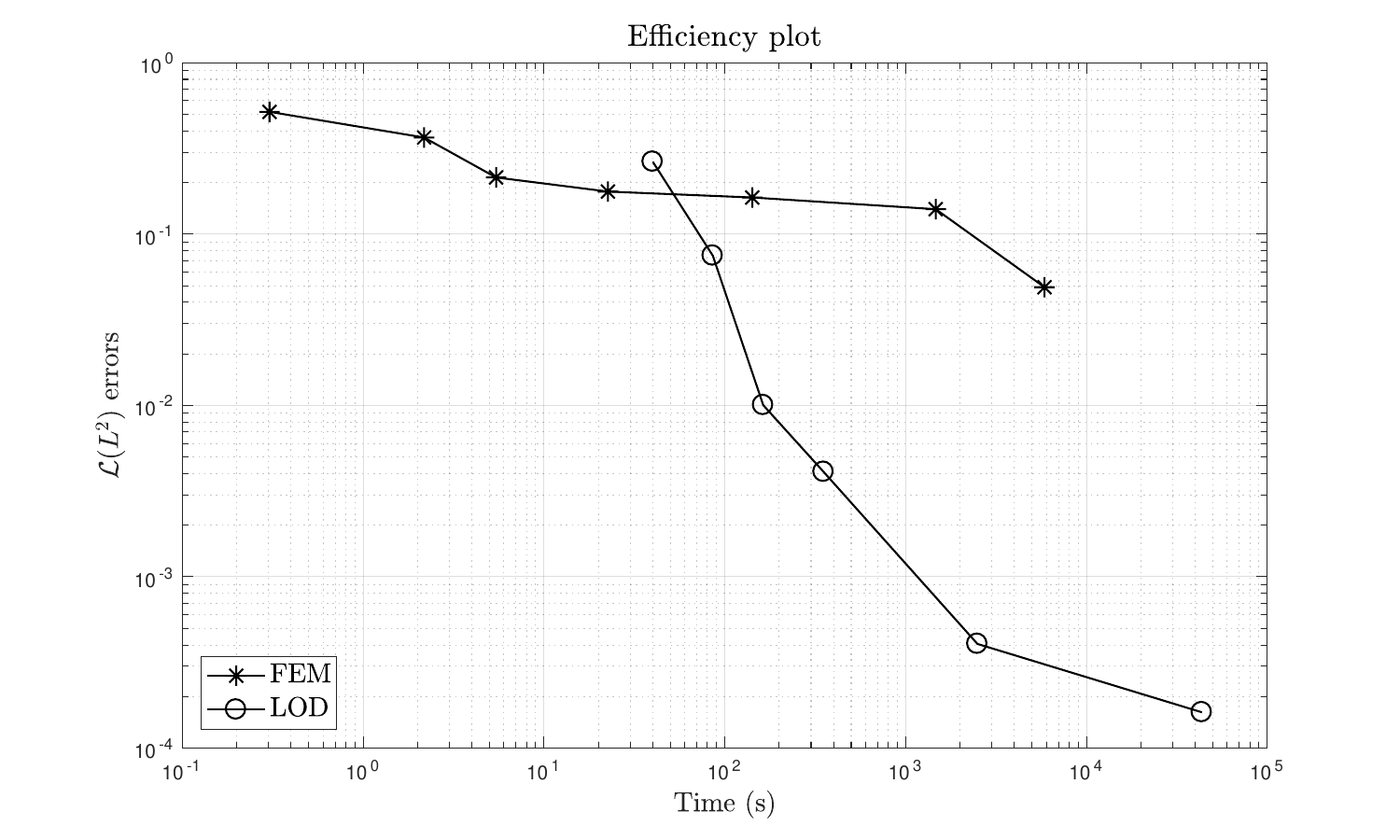}
\caption{Left: The $\LHH$- and $\LVV$-norm errors of the approximations computed in \nameref{example:Lshape}, plotted against the meshwidth. Right: The $\LHH$-norm errors plotted against the computation time.}
\label{fig:Lshape}
\end{figure}

\subsection{Example~3}\label{example:stripes}
We again consider the setting of \nameref{example:grid}, but replace the diffusivity constant. Here, $\kappa$ takes the constant value $1$ everywhere, except for in $7$ horizontal stripes where it is $10^{-2}$. The stripes are centered around the heights $j/8$, $j=1,\ldots,7$, and have a width of $2^{-7}$.

The results are shown in \cref{fig:stripes}. This time, the detrimental effect on the FEM discretization is even more pronounced, with almost no convergence until the thin stripes can be resolved. The LOD approximations are once again more accurate for all $H$. We note that the $\LHH$-error is not quite $\mathcal{O}(H^2)$ in this case, but rather close to $\mathcal{O}(H^2\log{H^{-1}})$ as predicted by \cref{thm:main_error_L2}. Like in the previous example, computing the LOD bases is cheap enough that the LOD approach is more efficient in all but the least accurate cases.

\begin{figure}[t!]
\centering
\includegraphics[width=0.49\columnwidth]{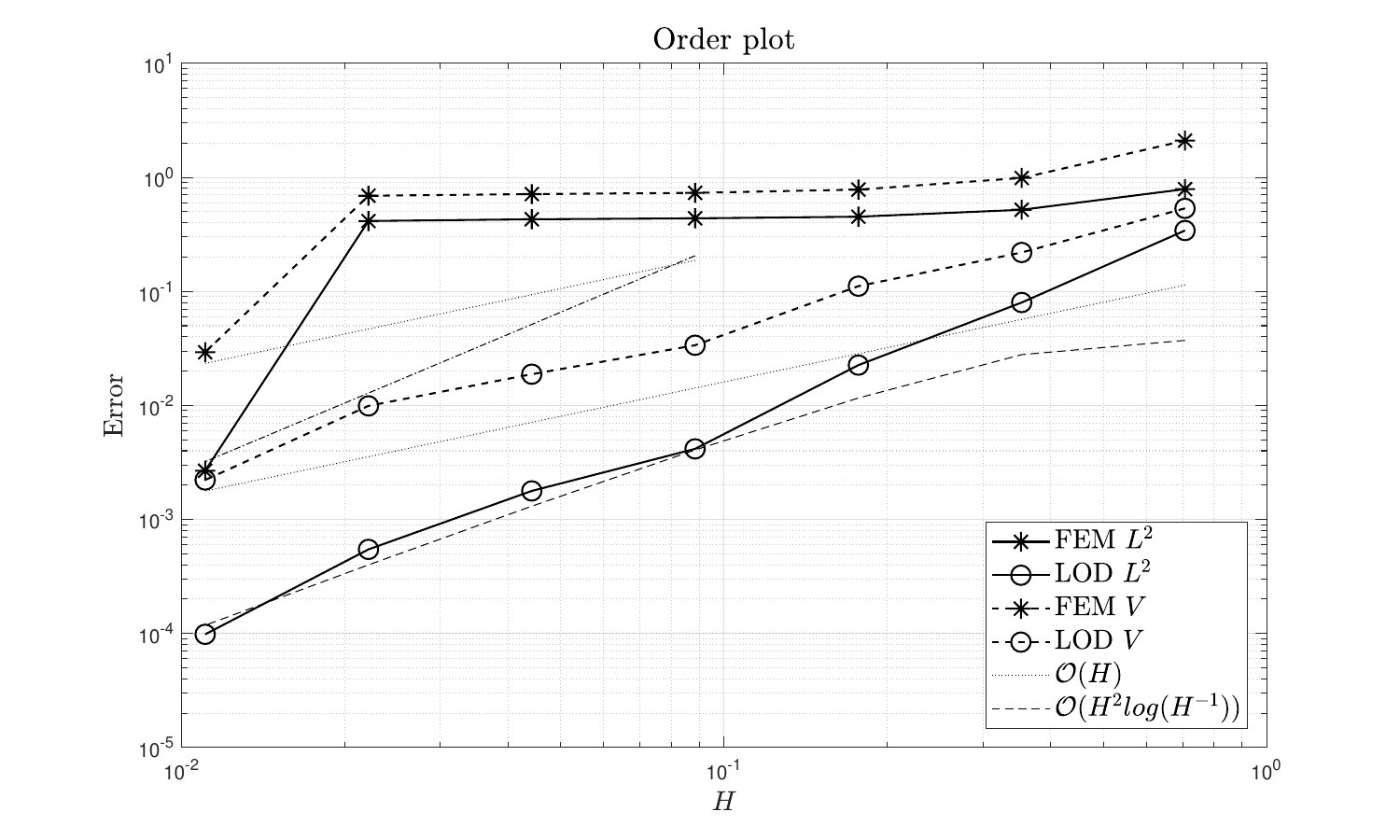}
\includegraphics[width=0.49\columnwidth]{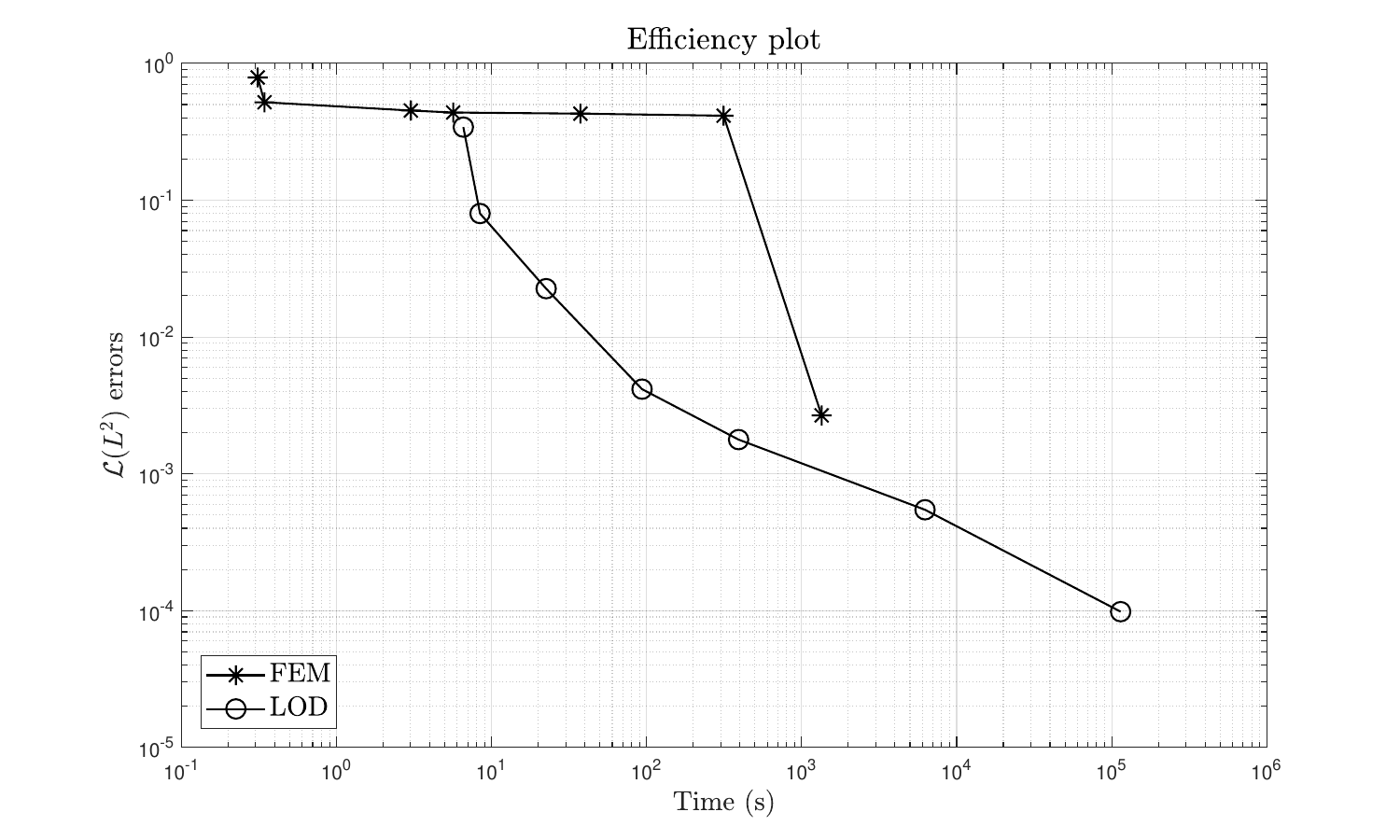}
\caption{Left: The $\LHH$- and $\LVV$-norm errors of the approximations computed in \nameref{example:stripes}, plotted against the meshwidth. Right: The $\LHH$-norm errors plotted against the computation time.}
\label{fig:stripes}
\end{figure}

\subsection{Example~4}\label{example:BCC_grid}
In this example, we deviate from the basic setting described in \cref{sec:errors} by considering a boundary control application. All parameters except for the boundary conditions and the input operator are the same as in \nameref{example:grid}.
  We call the union of the top and bottom edges of the unit square $\Gamma_D$ and impose homogeneous Dirichlet boundary conditions there. The left and right edges we denote $\Gamma_1$ and $\Gamma_2$, respectively, and there we impose nonhomogeneous Neumann boundary conditions. In particular, with the outward-pointing normal denoted by $n$, we consider functions $x$ satisfying
\begin{equation*}
  \kappa \nabla x \cdot n = \Psi u_i \quad \text{on } \Gamma_i.
\end{equation*}
Here, $u_1$ and $u_2$ are the two control inputs, and
\begin{equation*}
\Psi \colon s \mapsto
\begin{cases}
  2s, & 0 \le s \le 1/2, \\
2(1-s), & 1/2 < s \le 1,
\end{cases}
\end{equation*}
is a fixed function. The operator $\Ac$ now corresponds to $x \mapsto \nabla \cdot \big(\kappa \nabla x \big)$ on the space $\{x \in H^1(\Omega) \;|\; x_{\rvert_{\Gamma_D}} = 0 \}$  with no conditions imposed on $\Gamma_1$, $\Gamma_2$, while the (unbounded) operator $\Bc$ implements the Neumann boundary conditions. We refrain from elaborating further on this here, and simply note that the FEM matrix representation becomes
\begin{equation*}
  \Bb_{j,i}^h = \int_{\Gamma_i} \Psi \phi_j^h.
\end{equation*}
Fur further details on the proper abstract framework, see e.g.~\cite{LasieckaTriggiani2000} and \cref{subsec:BCC}.

Since \cref{ass:operators} is no longer satisfied, we may not apply \cref{thm:main_error_L2}. However, the results plotted in \cref{fig:BCC_grid} are similar to the results in previous examples. Again, the LOD approximations are more efficient except for the very coarsest meshes. This indicates that our theory could be extended also to the case of unbounded operators $\Bc$ and $\Cc$.

\begin{figure}[t!]
\centering
\includegraphics[width=0.49\columnwidth]{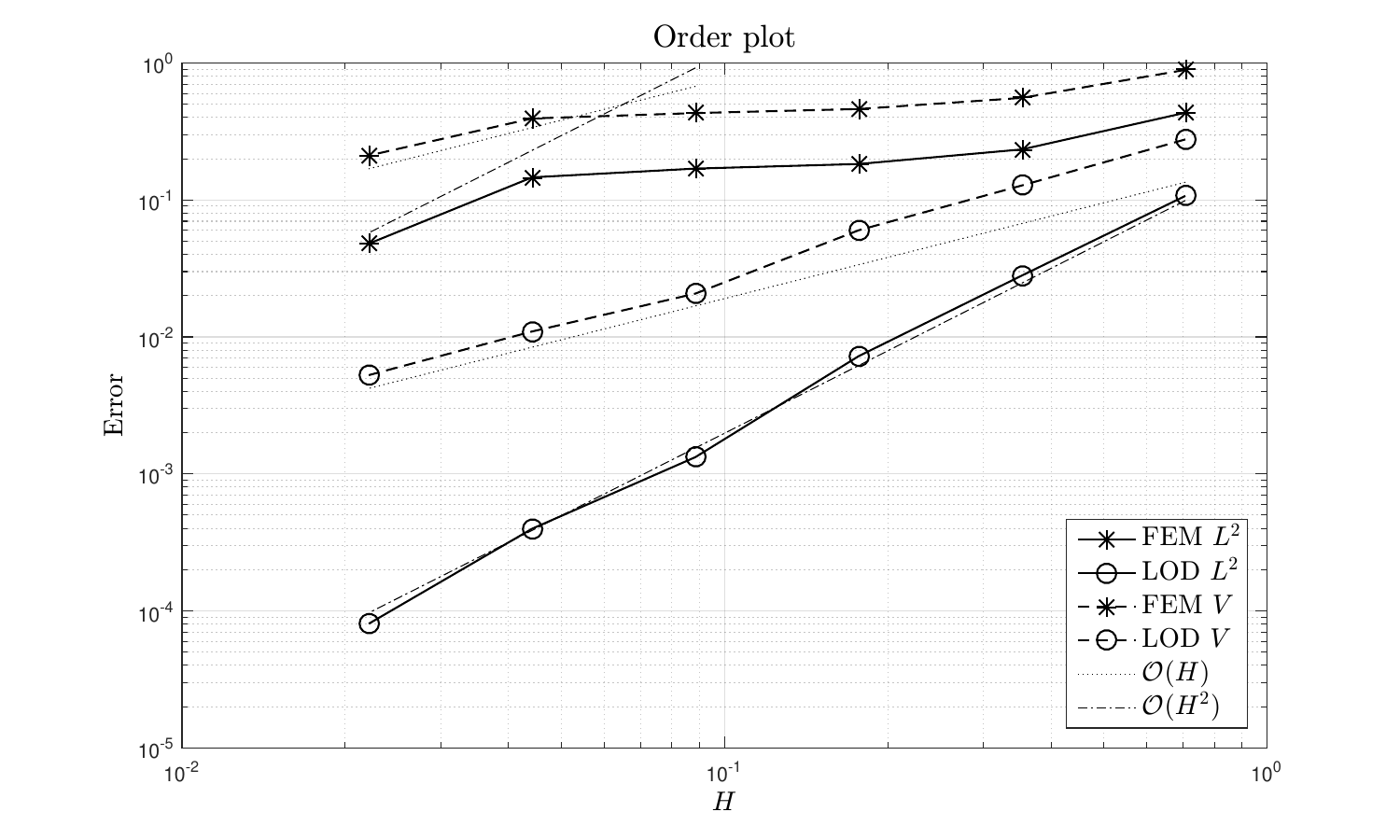}
\includegraphics[width=0.49\columnwidth]{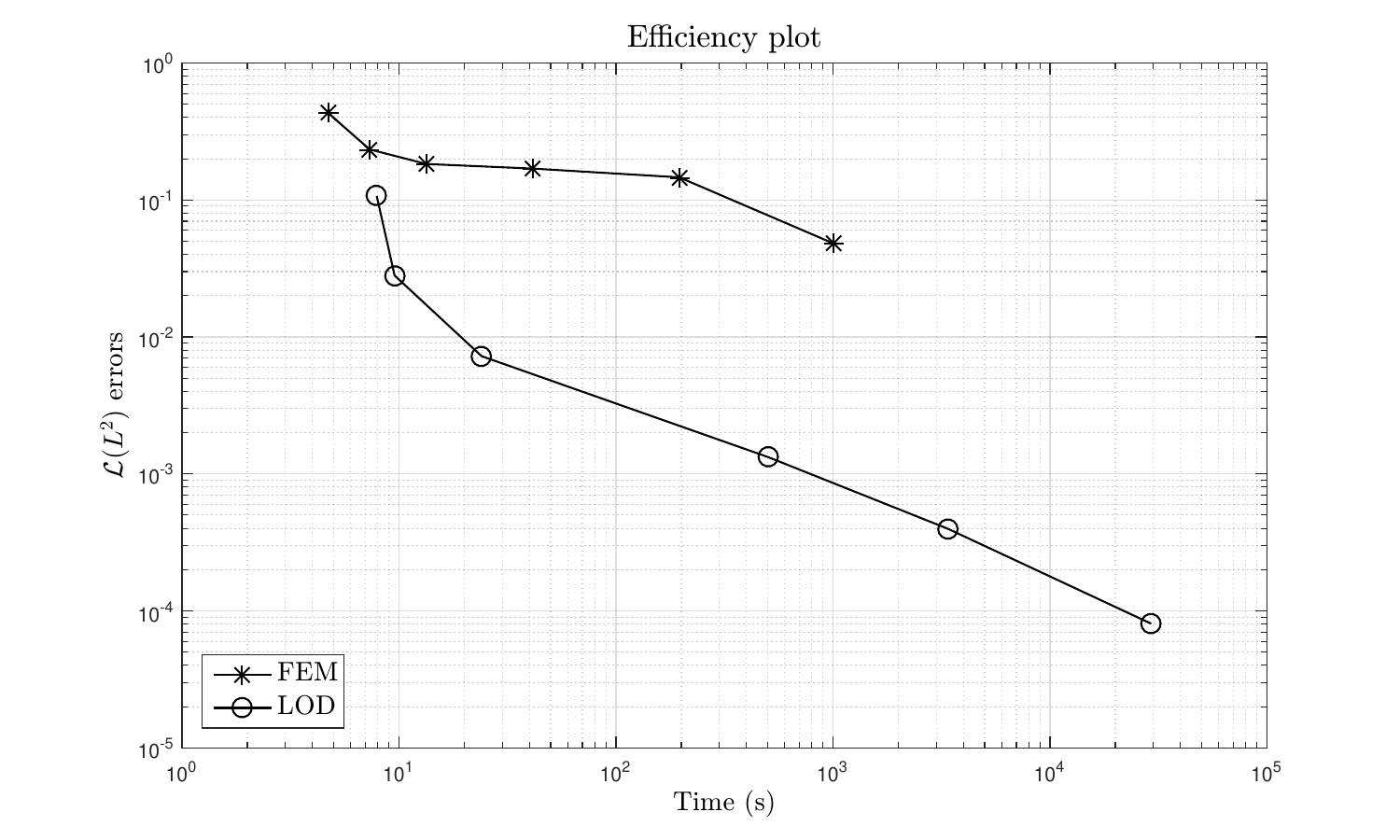}
\caption{Left: The $\LHH$- and $\LVV$-norm errors of the approximations computed in \nameref{example:BCC_grid}, plotted against the meshwidth. Right: The $\LHH$-norm errors plotted against the computation time.}
\label{fig:BCC_grid}
\end{figure}

\subsection{Example~5}\label{example:BCC_Ushape}
As a final experiment, we consider another boundary control application. The domain is formed like a lying U, see \cref{fig:diff_coeff_Ushape}. The thickness of each of the ``handles'' is $1/6$, the total horizontal extent $1$ and the vertical extent $4/6$. Inside the domain are three evenly spaced stripes with a diameter of $0.0052$. As previously, we consider $\Ac u = \nabla \cdot \big( \kappa \nabla u \big)$ where $\kappa = 10^{-2}$ everywhere except for in the stripes where instead $\kappa = 1$. We use homogeneous Neumann boundary conditions over the whole boundary, except for the two vertical sections on the left. On the top-most vertical part, $\Gamma_1$, we impose a nonhomogeneous Neumann condition $\kappa \nabla x \cdot n = \Psi u$ with $\Psi$ having the same hat-shaped form as in \nameref{example:BCC_grid}. On the bottom vertical part, $\Gamma_2$, we impose a homogeneous Dirichlet condition. These correspond to an insulated edge, a controllable heat input and a heat sink, respectively. The operator $\Bc$ is again given by $u \mapsto u\int_{\Gamma_1}{\Psi \varphi}$, and as output we take the mean of the temperature over the domain; $\Cc x = \int_{\Omega} x$. The meshes in this example have $n = 28, 84, 280, 1008, 3808, 14784$ interior nodes, respectively, while the reference solution uses $n=58240$.

\begin{figure}[t!]
\centering
\includegraphics[width=0.6\columnwidth]{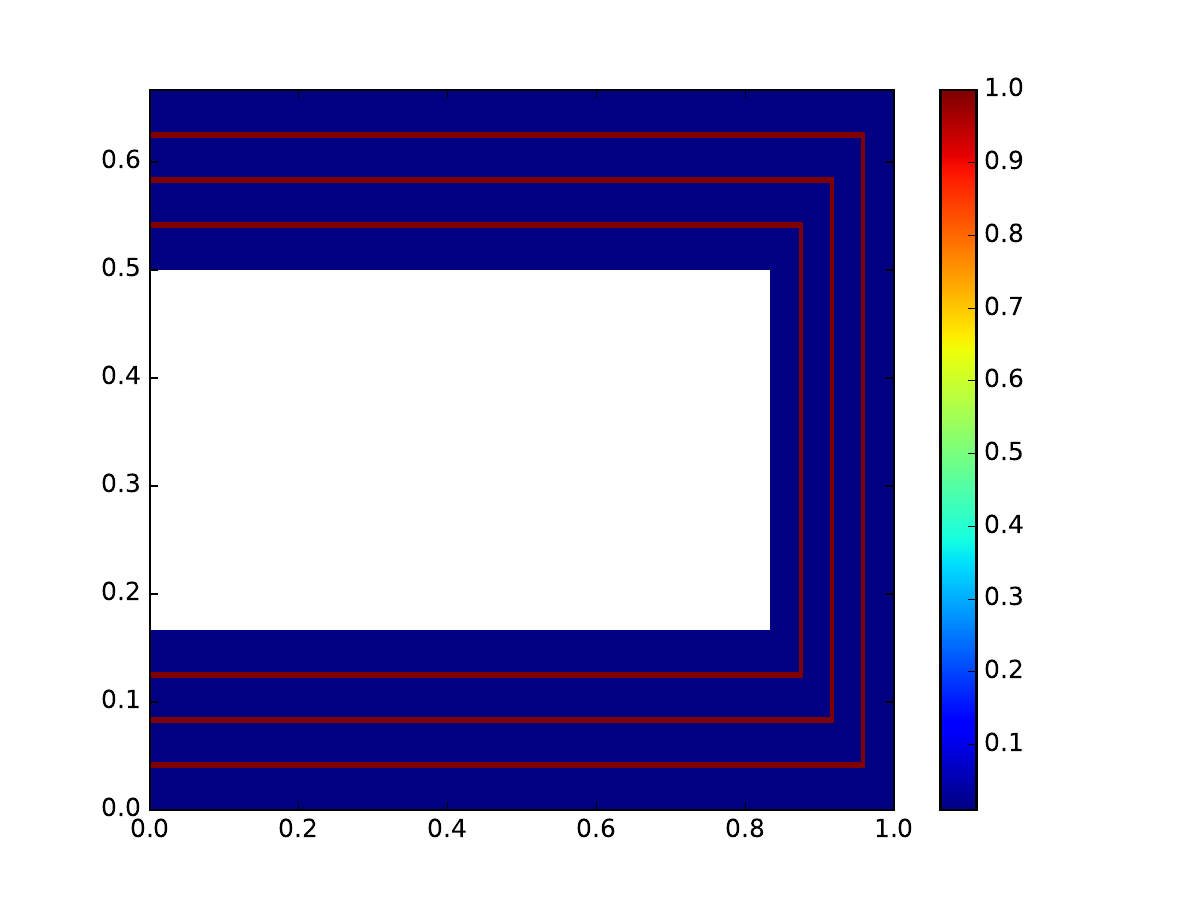}
\caption{The diffusion coefficient used in \nameref{example:BCC_Ushape}, plotted over the domain $\Omega$. (This figure is in color in the electronic version of the article.)}
\label{fig:diff_coeff_Ushape}
\end{figure}

The results are plotted in \cref{fig:BCC_Ushape}, where we can once again observe error behaviour consistent with the bounds given in \cref{thm:main_error_L2}.

\begin{figure}[t!]
\centering
\includegraphics[width=0.49\columnwidth]{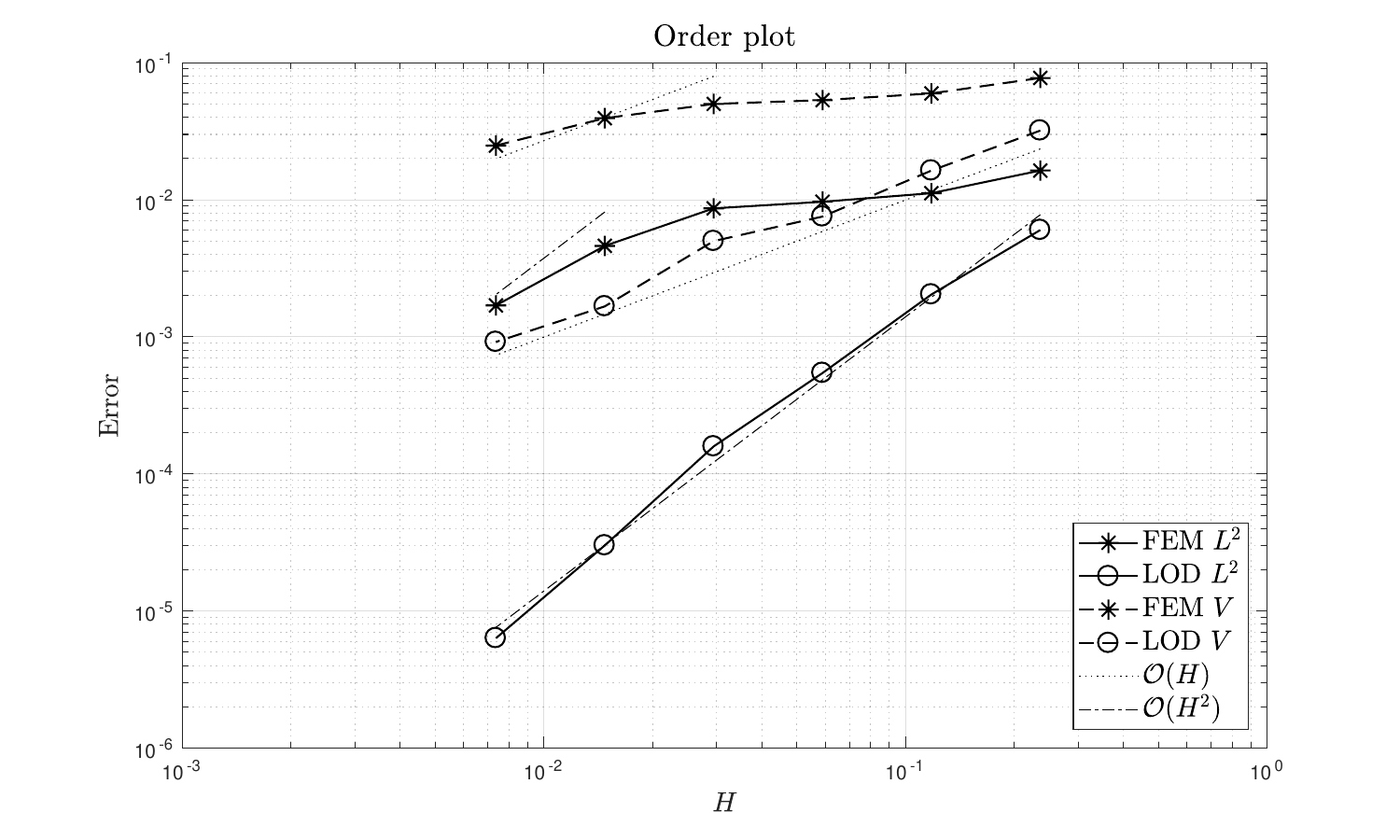}
\includegraphics[width=0.49\columnwidth]{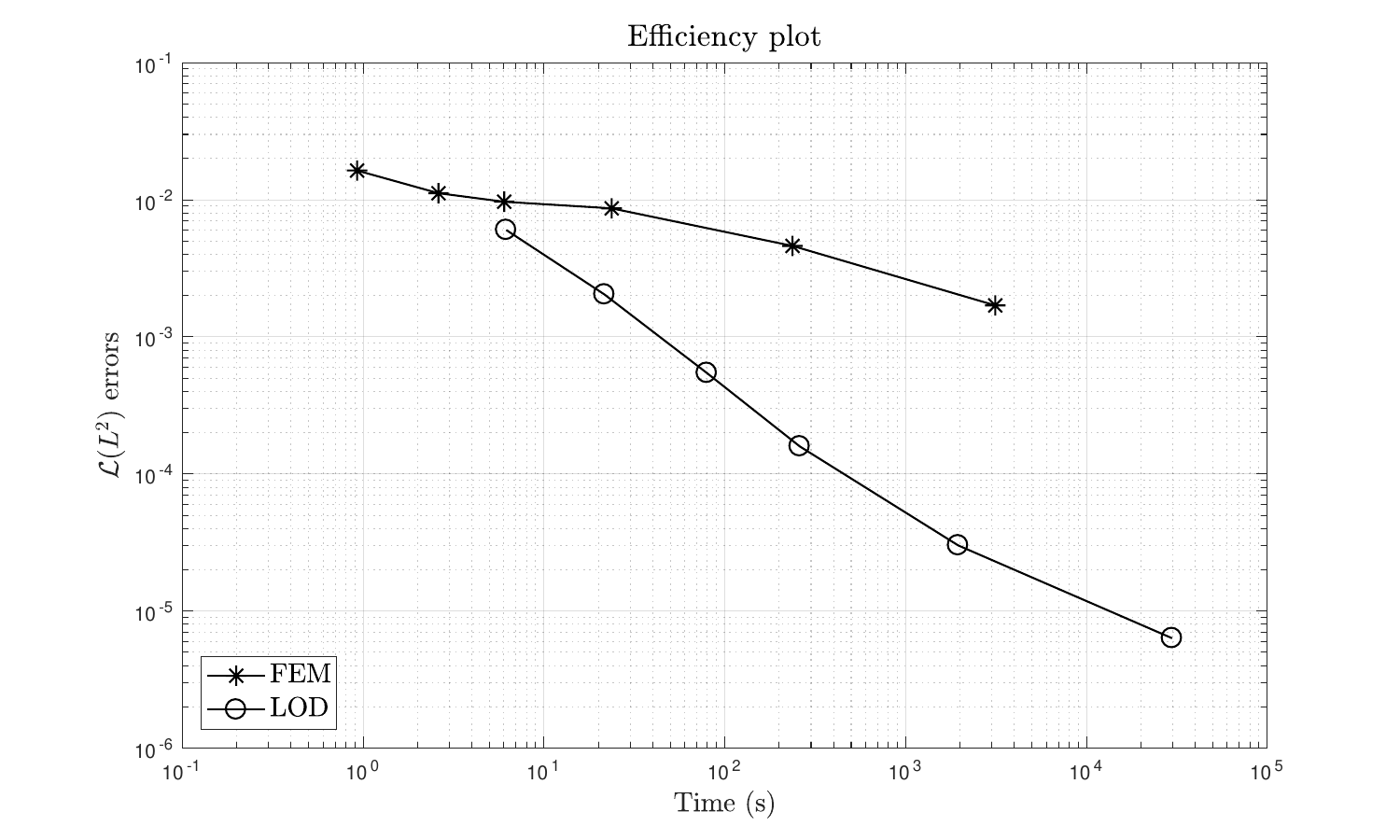}
\caption{Left: The $\LHH$- and $\LVV$-norm errors of the approximations computed in \nameref{example:BCC_Ushape}, plotted against the meshwidth. Right: The $\LHH$-norm errors plotted against the computation time.}
\label{fig:BCC_Ushape}
\end{figure}

\begin{remark}
  In all the experiments, we have chosen the fine-scale structure of the multiscale coefficient such that the reference FEM solution can resolve it, since otherwise we can not properly compute the respective errors. Decreasing the size of the fine-scale features even further would mean that the FEM convergence  is further delayed, while we may still compute accurate LOD approximations. In such a case, the efficiency of LOD in comparison to FEM is further (greatly) improved.
\end{remark}

\begin{remark}
We note that the finest discretizations of the demonstrated numerical experiments are representative of large-scale DRE problems. While there is of course no strict limit, the authors would at the time of writing classify large-scale as problems of size $n \ge 10^4$. Due to the matrix-valued nature of the equations, this is naively equivalent to solving a vector-valued differential equation of size $10^8$. While we do not employ a naive method, the number of unknowns still number in the millions. 
For readers more familiar with the theory of algebraic Riccati equations (AREs), i.e.\ the stationary counterparts of DREs, we note that one time step for a DRE solver is roughly equivalent to the solution of one corresponding ARE. The computational effort for solving a DRE is therefore usually at least two orders of magnitude higher, and large-scale in the ARE setting is therefore larger; starting rather  at around $n = 10^5$. We also note that the approximations were here computed on the equivalent of a modern desktop computer. With the increasing availability of parallelization on clusters or GPUs, we expect to see a shift towards even larger problems in the near future. However, for multiscale problems such as these, it is still critical to employ LOD.
\end{remark}

\section{Generalizations and future work} \label{sec:generalizations}
In this section we provide some notes on possible extensions of our theory and draw connections to related problems and methods.

\subsection{Boundary control} \label{subsec:BCC}
Boundary control applications such as \nameref{example:BCC_grid} occur frequently within the field of optimal control. Then either the input or output operator (or both) acts on the boundary of the computational domain.
In order to put such problems into the semigroup framework, one has to allow for unbounded operators $\Bc$ and $\Cc$~\cite{LasieckaTriggiani2000}. Clearly, our convergence analysis is no longer valid in that case, since we can no longer guarantee that $S_h \in \Lc(\Ltwo)$ or that $\Cc_h P_h \in \Lc(\Ltwo, Z)$. However, it is typically assumed that $\Bc$ and $\Cc$ are not \emph{too} unbounded. More specifically, if we suppose that $(-\Ac)^{-\beta}\Bc \in \Lc(U, \Ltwo)$ and $\Cc(-\Ac)^{-\gamma} \in \Lc(\Ltwo, Z)$, where $0 \le \beta + \gamma < 1$, we cover a large class of applications. 
Here,  $(-\Ac)^{-\alpha}$, denotes fractional powers of $\Ac$ which exist due to \cref{ass:operators}. They give rise to the spaces $X_{-\alpha} \supset \Ltwo$ as the completions of $\Ltwo$ in the norm $\norm{x}_{-\alpha} = \norm{(-\Ac)^{-\alpha}x}$. When $\gamma = 0$ we then have that $S_h \in \Lc(X_{-\beta})$, and by properly extending also the other involved operators to $X_{-\beta}$ we may follow the line of proof of \cref{thm:main_error_L2} and show convergence in $\Lc(X_{-\beta})$.

Obviously, this is a sub-optimal estimation, as $\norm{\cdot}_{-\beta}$ is a weaker norm than $\norm{\cdot}_{\Ltwo}$ for $\beta > 0$. However, from~\cite[Theorem 1.2.1.1]{LasieckaTriggiani2000} we have that $\Xt(t) S_h \Xt(t)$ is actually bounded in $\Ltwo$, at least away from $t = 0$. It therefore seems likely that one could use similar ideas to prove that the same holds for $\Yt(t) S_h \Yt(t)$, in which case we would have convergence in $\Lc(\Ltwo)$. Unfortunately, the theory required for such estimations is rather extensive, and we expect it to be even more so for the LOD approximations. We therefore leave such questions as future work.

\subsection{Systems of equations and applications in multiphysics}
In this paper we consider problems where the evolution operator $\Ac$ in the state equation defines an inner product of the form $a(u,v) = \int \kappa \nabla u \cdot \nabla v$. However, many interesting applications requires coupled systems to be modeled appropriately, for instance, multiphysical features such as thermoelasticity \cite{Biot56}, which describes temperature and displacement in a material. Another example is the singularly perturbed systems \cite{KaskiCoumarbatchGajic2010, MukaidaniXuMizukami2005}, which appear when modeling, for instance, fluid catalytic crackers. These are ill-conditioned problems due to a significantly larger time derivative for one (or more) of the equations.

The LOD method has successfully been applied to thermoelasticity and poroelasticity problems, see \cite{MalqvistPersson2017}. With more complicated models, the computational gain in using a coarse representation of the underlying partial differential equation is even greater. Analysis of such problems should be considered in the future.

\subsection{Other time discretizations} \label{subsec:other_timediscr}

It should also be noted that the LOD approach could be used with other time discretizations of the DRE. We have here chosen the Strang splitting scheme due to it being familiar to one of the authors and because an efficient implementation was readily available. However, there are also other types of splitting schemes~\cite{Stillfjord2017,Mena_etal2018}. Additionally, one might instead consider e.g.\ BDF and Rosenbrock methods~\cite{BennerMena2016,BennerMena2013,LangMenaSaak2015}, projection-based methods~\cite{KoskelaMena2017} or even peer methods~\cite{Lang2017}. These depend on solving linear equation systems rather than computing the solutions to parabolic problems, and the error analysis approach would thus differ. However, bounds similar to that given in \cref{lemma:parabolic_LOD} naturally exist also for stationary problems~\cite{MalqvistPeterseim2014}.

\subsection{Algebraic Riccati equations} \label{subsec:ARE}
The latter fact is even more relevant if one considers algebraic Riccati equations (AREs). These are the stationary counterpoints to the time-dependent DREs and arise when the final time $T$ in the cost functional goes to infinity. In this case, splitting does not apply, but we may still apply LOD to the equation to reduce its complexity. Then any method for AREs may be applied to solve this smaller problem, such as Newton-Kleinman ADI~\cite{BennerLiPenzl2008}, rational Krylov subspace methods~\cite{SimonciniSzyldMonsalve2014}, or RADI~\cite{BennerBujanovic_etal2018}. See also~\cite{BennerSaak2013} for a survey. Clearly, for each of these cases one would have to perform an error analysis such as the one provided in this paper.

 \section*{Acknowledgments}
 We are grateful to Fredrik Hellman for his assistance with the code for computing the LOD bases.

\end{document}